\newtheorem{theorem}{Theorem}[section] 
\newtheorem{lemma}[theorem]{Lemma}     
\newtheorem{corollary}[theorem]{Corollary}
\newtheorem{proposition}[theorem]{Proposition}
\providecommand{\abs}[1]{\lvert#1\rvert}
\providecommand{\norm}[1]{\lVert#1\rVert}
\title[Strassen's and Choquet theorems and transport]
{Applications of Strassen's theorem and Choquet theory to optimal transport problems, to uniformly convex functions and to uniformly smooth functions} 
\author{Krzysztof J. Ciosmak}
\begin{document}
\maketitle

\begin{abstract}
We provide a unifying interpretation of various optimal transport problems as a minimisation of a linear functional over the set of all Choquet representations of a given pair of probability measures ordered with respect to a certain convex cone of functions. This allows us to provide novel proofs of duality formulae. Among our tools is Strassen's theorem. 

We provide new formulations of the primal and the dual problem for martingale optimal transport employing a novel representation of the set of extreme points of probability measures in convex order on Euclidean space. We exhibit a link to uniformly convex and uniformly smooth functions and provide a new characterisation of such functions. 

We introduce a notion of martingale triangle inequality. We show that Kantorovich--Rubinstein duality bears an analogy in the martingale setting employing the cost functions that satisfy the inequality. 
\end{abstract}

\section{Introduction}

One of the aims of this article is to introduce a novel, unifying  approach to various optimal transport problems. We refer the reader to \cite{Villani2}, \cite{Villani1} for an extensive account on classical optimal transport and to \cite{Beiglbock1}, \cite{Galichon} for works on martingale optimal transport. The usefulness of the approach is verified by provision of a new, natural formulation of the primal and the dual problems for martingale optimal transport. We also simplify the formulation of the dual problem for martingale optimal transport, under the assumption that the considered cost function satisfies the martingale triangle inequality. The martingale triangle inequality, which we define in this paper, serves us to prove an analogue of the Kantorovich--Rubinstein duality formula in the martingale optimal transport setting.

We investigate martingale optimal transport for cost functions that satisfy the martingale triangle inequality. It turns out that this problem is closely related to classes of uniformly convex and of uniformly smooth functions. We study their continuity properties and provide a new characterisation of such functions.

We compute the set of extreme points of pairs of probabilities in convex order.

The advances obtained in this paper also comprise new proofs of the duality formulae for optimal transport problem, including the Kantorovich--Rubinstein formula and also the duality formula for multi-marginal optimal transport. 

The main tool that we employ to prove the aforementioned results is a new variant of Strassen's disintegration theorem. Originally, see \cite{Strassen}, the theorem was formulated for pairs of measures in convex order. Our result shows that the Strassen's argument may be adapted for pairs of measures that are ordered with respect to a tangent cone to certain convex sets of functions.

Another tool is Choquet theory, see e.g. \cite{Alfsen}, \cite{Phelps}, which allows us to infer the duality formulae from representations of appropriate sets of extreme points.

\subsection{General description of the method}\label{s:general}

The dual problems that arise in various optimal transport problems usually concern maximisation of a linear functional over some convex set of functions. Thus, provided that a maximiser exists, the measures under consideration are naturally ordered with respect to a tangent cone to this convex set at a maximiser.

Suppose we consider a variational problem of maximising 
\begin{equation}\label{eqn:problem}
\int_{\Omega} g \,d\nu-\int_{\Omega} g\,d\mu
\end{equation}
over the functions $g\in \mathcal{K}$, where $\mathcal{K}$ is a convex set of continuous, bounded functions on a locally compact set $\Omega$, stable under taking maxima, containing constants and such that $t+\mathcal{K}\subset\mathcal{K}$ for any constant function $t$.

Suppose that a maximiser $f\in\mathcal{K}$ exists. It follows then that measures $\mu,\nu$ are ordered with respect to the tangent cone $\mathcal{F}$ of $\mathcal{K}$ at $f$, i.e.,  
\begin{equation}\label{eqn:polar}
\int_X \lambda (f-g)\,d\nu\geq \int_X \lambda(f-g)\,d\mu\text{ for all }\lambda\geq 0\text{ and all }g\in\mathcal{K}.
\end{equation}
Strassen's theorem, Theorem \ref{thm:varstrassen}, allows us to infer that the extreme points of measures that satisfy (\ref{eqn:polar}) are necessarily contained in the set of measures of the form $(\delta_x,\eta)$, for some Borel probability measure $\eta$ such that $\delta_x$ is majorised by $\eta$ in the order induced by $\mathcal{F}$.

We consider the problem (\ref{eqn:polar}) in the setting of classical, multi-marginal and martingale optimal transport. To precisely characterise the extreme points of pairs of measures that are ordered with respect to appropriate cone of functions we employ the $c$-convexification method and the symmetry of the problem in the case of multi-marginal optimal transport.

It turns out that for the pairs of measures that are extreme points the duality is trivial, thanks to a construction based on the $c$-convexification. With the help of Choquet's theorem, this allows us to infer the duality for various optimal transport problems. That is, we prove that the maximal value of (\ref{eqn:problem}) is equal to the optimal cost of corresponding transport problem.

%
%

We provide a more detailed description of the results in the following sections.

\subsection{Strassen's theorem}\label{ss:strassen}

In the martingale optimal transport problem, one considers two Borel probability measures $\mu,\nu$ on $\mathbb{R}^n$ with finite first moments that are in convex order. That is
\begin{equation}\label{eqn:order}
\int_{\mathbb{R}^n}f\,d\mu\leq \int_{\mathbb{R}^n}f\,d\nu\text{ for any convex }f\colon\mathbb{R}^n\to\mathbb{R}.
\end{equation}
A theorem of Strassen, see \cite{Strassen}, implies that there exists a Markov kernel $P$ from $\mathbb{R}^n$ to $\mathbb{R}^n$ such that for any $x\in\mathbb{R}^n$ the pair $(\delta_x,P(\cdot,x))$ is in convex order and $\nu$ can be represented as an integral
\begin{equation*}
\nu=\int_{\mathbb{R}^n}P(\cdot,x)\,d\mu(x).
\end{equation*}
%

Over the time several variants of the theorem have been proven. Already the paper \cite{Valadier} published in 1974 observes that Strassen's theorem has sparked a substantial research interest. 
Since then even more results have been proven in this direction. In the paper \cite{Meyer-Nieberg} several Strassen's type disintegration theorems are surveyed. In particular, a version for Dedekind complete Riesz spaces, which implies the other versions presented in that paper.
Let us also mention a version for convex cones with localized order structure, see \cite{Fuchssteiner}, and a non-commutative version, see \cite{Hackenbroch}.

To provide some intuition on the theorem, let us recall a standard fact which says that if a linear functional is bounded by a finite sum of semi-norms, then it may be decomposed into a sum of linear functionals, each of which is bounded by one of the considered semi-norms. This fact is a cornerstone of Strassen's theorem and its generalisations. For more in-depth discussion see \cite{Neumann}.

Consider a cone of continuous and bounded functions $\mathcal{F}$ on a locally compact Polish space $\Omega$ that is stable under taking maxima and contains constant functions. Strassen's theorem may be generalised to pairs $(\mu,\nu)$ of probability measures which are ordered with respect to $\mathcal{F}$, which amounts to a demand that 
\begin{equation}\label{eqn:balayage}
\int_{\Omega}f\,d\mu\leq \int_{\Omega}f\,d\nu\text{ for any }f\in\mathcal{F}.
\end{equation}
In this paper, we employ this result, see Theorem \ref{thm:extr} and Proposition \ref{pro:extr}, so that in place of a cone $\mathcal{F}$ one may put a tangent cone to a convex set of continuous and bounded functions that is stable under maxima, contains constants and is closed under addition of constants. This is precisely the setting we already discussed in Section \ref{s:general}, cf. (\ref{eqn:polar}).

Let us mention here that pairs $(\mu,\nu)$ that satisfy (\ref{eqn:balayage}) are said to be in $\mathcal{F}$-balayage. The balayage theory has been developed by Choquet, Mokobodzki and Sibony, see \cite{Choquet}, \cite{Mokobodzki1} and \cite{Mokobodzki2}. Let us also mention the book of Meyer \cite[Chapter 11]{Meyer}.
For a reference on the related topic of convex cones we refer the reader to the books \cite{Fuchssteiner1} and \cite{Becker}.
The relation of the balayage theory to the optimal transport problems has been already established in \cite{Bowles}. We refer also to a paper of Ghoussoub \cite{Ghoussoub2} for further developments of this line of research.
The theory is a sub-field of the potential theory, which has already found many applications to martingale optimal transport.

\subsection{Martingale optimal transport}

Recently, great attention has been paid to the problem of martingale optimal transport, especially in the multi-dimensional setting. The initial interest in this problem stems from its applications to mathematical finance \cite{Beiglbock1}, \cite{Galichon}, and its link to the Skorokhod embedding problem  \cite{Beiglbock}, \cite{Obloj}. Let us also mention contributions of Ghoussoub, Kim, Lim  \cite{Ghoussoub}, by De March, Touzi \cite{DeMarch3}, \cite{DeMarch2}, \cite{DeMarch1} and by Ob\l\'oj and Siorpaes \cite{Siorpaes}, which deal with the important notion of irreducible convex paving.

Suppose that $\mu,\nu$ is a pair of Borel probability measures in convex order, see Section \ref{ss:strassen}, with finite first moments.
The theorem of Strassen implies that there exists a coupling $\pi$ on $\mathbb{R}^n\times\mathbb{R}^n$ with respective marginals $\mu$ and $\nu$, such that if $(X,Y)$ is a random vector distributed according to $\pi$ then the pair $(X,Y)$ is a one-step martingale, i.e., $\mathbb{E}(Y|X)=X$. The martingale optimal transport problem is to find such coupling $\pi$ that minimises the integral
\begin{equation*}
\int_{\mathbb{R}^n\times\mathbb{R}^n}c\,d\pi
\end{equation*}
for a given measurable cost function $c\colon\mathbb{R}^n\times\mathbb{R}^n\to\mathbb{R}$. The dual problem is to find the maximal value of 
\begin{equation*}
\int_{\mathbb{R}^n}u \,d\mu-\int_{\mathbb{R}^n}v\,d\nu
\end{equation*}
among all pairs $u,v\colon\mathbb{R}^n\to\mathbb{R}$ of continuous functions such that there exists $\gamma\colon\mathbb{R}^n\to\mathbb{R}^n$ satisfying
\begin{equation}\label{eqn:ori}
u(x)-v(y)+\langle \gamma(x),y-x\rangle\leq c(x,y)\text{ for all }x,y\in\mathbb{R}^n.
\end{equation}

In Theorem \ref{thm:ex} we study the set of extreme points of pairs of probability measures in convex order. It consists of pairs of the form
\begin{equation*}
\Big(\delta_x,\sum_{i=1}^{d+1}t_i\delta_{x_i}\Big)\text{ with }x=\sum_{i=1}^{d+1}t_ix_i
\end{equation*}
for some positive $t_1,\dotsc,t_{d+1}$ summing up to one, some affinely independent points $x_1,\dotsc,x_{d+1}\in\mathbb{R}^n$ and $d\leq n$. Using this assertion, Theorem \ref{thm:extr} and Proposition \ref{pro:extr}, we provide a novel formulation of the primal problem and the dual problem in martingale optimal transport, see Theorem \ref{thm:dualmart}. Namely, instead of looking at the infimum among all martingale couplings, we look at the infimum among all Choquet representations of a given pair of probability measures by means of extreme points of measures in convex order. The advantage of this viewpoint is that it is a natural counterpart to a redefined dual problem, which relies on an intrinsic definition of the considered class of functions.

Employing Theorem \ref{thm:ex}, we prove that the set of pairs of continuous functions satisfying (\ref{eqn:ori}) is equal to the set of all pairs $u,v\colon\mathbb{R}^n\to\mathbb{R}$ for which
\begin{equation}\label{eqn:orip}
u\Big(\sum_{i=1}^{n+1}t_ix_i\Big)-\sum_{i=1}^{n+1}t_i v(x_i)\leq \sum_{j=1}^{n+1}t_jc\Big(\sum_{i=1}^{n+1}t_ix_i,x_j\Big)
\end{equation}
 for all $x_1,\dotsc,x_{n+1}\in\mathbb{R}^n$ and all non-negative $t_1,\dotsc,t_{n+1}$ that sum up to one. 
The proof employs the Hahn--Banach theorem and the Riesz' representation theorem.
It works also in the case of functions on general convex sets $K\subset\mathbb{R}^n$. Moreover, we can prove equivalence of conditions (\ref{eqn:ori}) and (\ref{eqn:orip}) for each barycentre $x=\sum_{i=1}^{n+1}\lambda_ix_i$ separately.
For the precise formulation of the result we refer to Theorem \ref{thm:col}. 

\subsection{Martingale triangle inequality}

One of new results in this paper is an analogue in the martingale optimal transport setting of Kantorovich--Rubinstein duality. In the classical optimal transport problem with metric cost function Kantorovich--Rubinstein duality tells that the optimal cost of transporting $\mu$ to $\nu$ is equal to the supremum of integrals of $1$-Lipschitz functions with respect to the signed measure $\mu-\nu$. In other words, if the cost function is given by a metric, one may restrict the set of pairs of functions over which the supremum is taken to the set of pairs of equal functions.

Kantorovich--Rubinstein duality has found numerous applications. One of them is a localisation technique, which allows to reduce certain high-dimensional problems to a collection of one-dimensional problems. In \cite{Klartag} the technique is studied in the context of weighted Riemannian manifolds, while in \cite{Cavalletti} it is adapted to metric measure spaces satisfying curvature-dimension condition. Certain higher-dimensional generalisations of the technique, proposed in \cite{Klartag}, are studied in \cite{Ciosmak2} and in \cite{Ciosmak3}.

Similar simplification can be accomplished in the martingale optimal transport problem provided that a cost functions satisfies the \emph{martingale triangle inequality}, which we define in this paper, see Definition \ref{defin:martingale}.
Let  $c\colon\mathbb{R}^n\times\mathbb{R}^n\to\mathbb{R}$. The inequality is satisfied by $c$ if 
\begin{equation*}
\sum_{i=1}^{n+1}\lambda_i c(x,x_i)-c\Big(x,\sum_{i=1}^{n+1}\lambda_ix_i\Big)\leq \sum_{i=1}^{n+1}\lambda_i c\Big(\sum_{j=1}^{n+1}\lambda_jx_j,x_i\Big)
\end{equation*} 
for any $x,x_1,\dotsc,x_{n+1}\in\mathbb{R}^n$ and any non-negative $t_1,\dotsc,t_{n+1}$ that sum up to one.

An equivalent way to state the inequality is to require that for any $x\in\mathbb{R}^n$ and any bounded one-step martingale $(X_0,X_1)$ it holds
\begin{equation*}
\mathbb{E} c(x,X_1)-\mathbb{E} c(x,X_0)\leq\mathbb{E}c(X_0,X_1).
\end{equation*}
Another characterisation may be provided in the spirit of (\ref{eqn:ori}).

We prove that if $c$ satisfies the inequality and vanishes on the diagonal, then the value of the dual problem will not be changed if we restrict ourselves to pairs of equal functions, see Theorem \ref{thm:clc}. That is, it is enough to consider functions $f$ such that 
\begin{equation}\label{eqn:one}
f\Big(\sum_{i=1}^{n+1}t_ix_i\Big)-\sum_{i=1}^{n+1}t_i f(x_i)\leq \sum_{j=1}^{n+1}t_jc\Big(\sum_{i=1}^{n+1}t_ix_i,x_j\Big),
\end{equation}
for all $x_1,\dotsc,x_{n+1}\in\mathbb{R}^n$ and all non-negative $t_1,\dotsc,t_{n+1}$ that sum up to one.

This class of functions is also investigated in Theorem \ref{thm:onefunction}.

\subsection{Continuity properties}

In Theorem \ref{thm:lipm} we study continuity properties of the class of functions considered in Theorem \ref{thm:clc}. We prove that, under certain mild continuity assumptions on the function $c$, any function that satisfies (\ref{eqn:one}) is locally Lipschitz. The argument adapts the reasoning of \cite{Wayne}. Here we do not need to assume that $c$ satisfies the martingale triangle inequality.

This result complements the standard knowledge about convex functions, cf. \cite{Wayne}.

In Theorem \ref{thm:onefunction} we show that the characterisation of convex functions by means of subdifferentials bears an analogy in the setting of functions satisfying (\ref{eqn:one}). 

These results prove to be useful also for characterisation of uniformly smooth and uniformly convex functions.

\subsection{Uniformly convex and uniformly smooth functions}

As an application of the developed approach, we provide a characterisation of uniformly convex and uniformly smooth functions that complements results of Az\`e and Penot \cite{Aze} and of Z\u{a}linescu \cite{Zalinescu}, see Theorem \ref{thm:uni}. We refer the reader to \cite{Aze} for a detailed study of various characterisation of these functions. 

These and related classes of functions have been extensively studied and found numerous applications. Let us mention works of Asplund \cite{Asplund}, Asplund and Rockafellar \cite{Asplund2}, a related work of \v{S}mulyan \cite{Smulyan}. Applications include classical gradient methods \cite{Dunn}, \cite{Lucchetti} and  proximal algorithm \cite{Rockafellar3}.

Let $\sigma\colon \mathbb{R}\to\mathbb{R}$. Let us recall that a function $f\colon\mathbb{R}^n\to\mathbb{R}$ is $\sigma$-uniformly convex provided that there exists $\gamma\colon\mathbb{R}^n\to\mathbb{R}$ such that
\begin{equation*}
f(x)+\sigma(\norm{y-x})+\langle \gamma(x),y-x\rangle\leq f(y)\text{ for all } x,y\in\mathbb{R}^n, \lambda\in [0,1].
\end{equation*}
If $\sigma$ is locally Lipschitz with $\sigma(0)=0$, we prove that this is equivalent to 
\begin{equation*}
f\Big(\sum_{i=1}^{n+1}\lambda_ix_i\Big)-\sum_{i=1}^{n+1}\lambda_i f(x_i)\leq -\sum_{i=1}^{n+1}\lambda_i \sigma \Big(\Big\lVert\sum_{j=1}^{n+1}\lambda_jx_j-x_i\Big\rVert\Big)
\end{equation*}
for all $x_1,\dotsc,x_{n+1}\in \mathbb{R}^n$ and all non-negative $\lambda_1,\dotsc,\lambda_{n+1}$ that sum up to one. Another equivalent condition is that for any bounded one-step martingale $(X_0,X_1)$ there is
\begin{equation*}
\mathbb{E} \big(f(X_0)-f(X_1)\big)\leq -\mathbb{E}\sigma (\norm{X_0-X_1}).
\end{equation*}
A similar result holds true for $\sigma$-uniformly smooth functions, that is functions $g\colon\mathbb{R}^n\to\mathbb{R}$ such that there exists $\gamma\colon\mathbb{R}^n\to\mathbb{R}$ such that 
\begin{equation*}
g(x)+\sigma(\norm{y-x})+\langle \gamma(x),y-x\rangle\geq g(y)\text{ for all }x,y\in\mathbb{R}^n, \lambda\in [0,1].
\end{equation*}
We refer the reader to Theorem \ref{thm:uni} for a general statement of the equivalences, which hold true also for functions on an arbitrary open, convex set $K\subset\mathbb{R}^n$.

\subsection{Optimal transport}

For the sake of illustration of the method and for completeness, we also provide proofs of Kantorovich duality, Kantorovich--Rubinstein duality and duality in the multi-marginal optimal transport problem.

Let us recall the topic of classical optimal transport and briefly review the literature.

Suppose we are given two Borel probability measures $\mu,\nu$ on topological spaces $X,Y$ and a measurable cost function $c\colon  X \times Y \to \mathbb{R}$. The optimal transport problem, proposed by Monge \cite{Monge}, is concerned with finding a measurable map $T\colon X \to Y $ such that it pushes $\mu$ forward to $\nu$, $T_{\#}\mu=\nu$, i.e., for any Borel set $A\subset Y$ there is $\nu(A)=\mu(T^{-1}(A))$, and such that the integral
\begin{equation*}
\int_X c(x,T(x))\,d\mu(x)
\end{equation*}
is minimal. In \cite{Kantorovich2}, \cite{Kantorovich} Kantorovich proposed a relaxed version of the problem. Namely, instead of looking for an optimal map, one seeks for a coupling $\pi$, i.e., a Borel probability measure on $X\times Y$, whose marginal distributions are $\mu$ and $\nu$, that minimises the integral
\begin{equation}\label{eqn:minik}
\int_{X\times Y}c \,d\pi.
\end{equation}
Kantorovich also provided a dual formulation of the problem. In the dual problem one wants to maximise 
\begin{equation}\label{eqn:maxik}
\int_X u\,d\mu+\int_Y v\,d\nu.
\end{equation}
among all pairs of continuous functions $u\colon X\to \mathbb{R}$ and $v\colon Y\to\mathbb{R}$ such that 
\begin{equation}\label{eqn:condid}
u(x) + v(y) \leq c(x, y),
\end{equation}
for all $x\in X$ and $y\in Y$.

It has been proven, see e.g. \cite[Theorem 1.3]{Villani2}, that the minimal value of (\ref{eqn:minik}) and the maximal value of (\ref{eqn:maxik}) coincide, under the assumption that $c$ is lower semi-continuous and the considered spaces are Polish. We reprove this result using tools of Choquet theory and Strassen's theorem, Theorem \ref{thm:extr} together with Proposition \ref{pro:extr}. For the formulation of the duality results and their proofs see Theorem \ref{thm:opti} for the classical problem and Theorem \ref{thm:kr} for the optimal transport problem with metric cost function.

Let us note that this result may be also obtained under the assumption that $c$ is merely Borel measurable and bounded from above, see \cite{Kellerer2}, via Choquet's capacitability theorem.

Let us also mention a work of Ramachandran and R\"uschendorf \cite{Ramachandran}, \cite{Ramachandran2}, where the problem is studied beyond the setting of Polish space. The assumption here, which is also shown to be essential, is that one of the coordinate spaces is perfect.

The vast literature on this problem includes \cite{Beiglbock5}, where the authors provide a suitable relaxation of the optimal transport problem: the cost is defined as a limit of partial optimal transport costs.  In there, it is proven that, under this relaxation, the duality theorem holds true for Borel cost functions.

A long line of research on related questions includes also works of Dudley \cite{Dudley1}, \cite{Dudley2}, Fernique \cite{Fernique}, Mikami \cite{Mikami}, de Acosta \cite{deAcosta}, Mikami and Thieullen \cite{Mikami2}, Beiglb\"ock and  Schachermayer \cite{Beiglbock6}. We refer the reader to the bibliographical notes in a book of Villani \cite[pp. 97-104]{Villani1} for further details about the history of the problem and its importance for the study of geometry of optimal transport plans.

\subsection{Multi-marginal optimal transport}

Let us also mention the problem of multi-marginal optimal transport. Suppose we are given Borel probability measures $\mu_1,\dotsc,\mu_k$ on respective topological spaces $X_1,\dotsc,X_k$ and a Borel measurable function $c\colon X_1\times\dots\times X_k\to\mathbb{R}$. Among, all probability measures $\pi$ on the product $X_1\times\dots \times X_k$ with respective marginals $\mu_1,\dotsc,\mu_k$, we look for the one that minimises the integral
\begin{equation*}
\int_{X_1\times\dots \times X_k}c\,d\pi.
\end{equation*}
We refer the reader to \cite{Pass1} for an introduction to the topic with several applications provided, to \cite{Pass2} and to \cite{Pass}. Let us mention that already in \cite{Kellerer2}  a duality formula for multiple-marginal transport problems is provided. 

Our method can be applied to prove a duality formula also in this setting, see Theorem \ref{thm:optimany}. Here, similarly as in the two-marginal case, we employ a version of $c$-convexification method, see Lemma \ref{lem:convexification}. The method has been already exploited in the context of multi-marginal optimal transport in \cite{Carlier} and in \cite{Swiech}. Yet,  our result is more general, as it combines usual $c$-convexification with an extension-type result. Also, we prove that the supremum considered in the dual problem may be taken over a set consisting of bounded functions, see Lemma \ref{lem:boundedness}.

\subsection{Further research}

One of the possible future research directions related to the findings of this paper lie in generalisations of the aforementioned localisation technique. Let us note that the irreducible convex paving, investigated in the setting of martingale transport, can be viewed as a variant of the technique. Let us note here that while the condition of convex ordering is preserved by the paving, it has not yet been proven that curvature-dimension condition would localise as well. However, in \cite{Caravenna2}, it has been proven that the disintegration with respect to a partitioning into faces of a convex function yields measures absolutely continuous with respect to Hausdorff measures of appropriate dimensions, provided that the initial measure was also absolutely continuous. We see this result as a first step in establishing localisation of curvature-dimension condition in the setting of martingale transport. 

Since Kantorovich--Rubinstein duality plays a vital r\^ole for localisation, we think that the duality result established for the cost functions that satisfy martingale triangle inequality, and optimal transport with respect to such cost functions, are of substantial interest.

Another direction where the current developments may be useful is concerned with localisation of probability measures that are ordered with respect to a cone of functions.

\subsection*{Acknowledgements}

The financial support of Clardendon Fund, St John’s College in Oxford and EPSRC is gratefully acknowledged. The author would like to acknowledge the kind hospitality of the Erwin Sch\"odinger International Institute for Mathematics and Physics where parts of this research were developed under the frame of the Thematic Programme on Optimal Transport. This research was also partly supported by the ERC Starting Grant 802689 CURVATURE.
 
\subsection*{Outline of the article}

In Section \ref{s:strassen} we recall necessary definitions and prove the variant of Strassen's disintegration theorem; see Theorem \ref{thm:varstrassen}, Theorem \ref{thm:extr} and Proposition \ref{pro:extr}.

In Section \ref{s:martingale} we characterise the extreme points of pairs of Borel probability measures in convex order; see Theorem \ref{thm:ex}. We prove a duality result for martingale optimal transport provided that there exists a maximiser of the dual problem; see Theorem \ref{thm:dualmart}.

In Section \ref{s:duality} we investigate class of functions that appear in the dual problem to the martingale optimal transport; see Theorem \ref{thm:col}.

In Section \ref{s:triangle} we introduce the notion of the martingale triangle inequality and prove that if the inequality is satisfied by a cost function, then in the dual problem to martingale optimal transport the consideration may be restricted to pairs of equal functions; see Definition \ref{defin:martingale} and Theorem \ref{thm:clc}.

In Section \ref{s:continuity} we provide a characterisation and study continuity properties of the class of functions considered in Section \ref{s:triangle}; see Theorem \ref{thm:lipm} and Theorem \ref{thm:onefunction}.

In Section \ref{s:uniform} we apply the results of Section \ref{s:continuity} and obtain a novel characterisation of uniformly convex and uniformly smooth functions; see Theorem \ref{thm:uni}.

In Section \ref{s:transport} we provide a proof of Kantorovich duality in the two-marginal case; see Theorem \ref{thm:opti}.

In Section \ref{s:Kant-Rub} we prove Kantorovich--Rubinstein duality, i.e., the duality result for a cost function given by a metric; see Theorem \ref{thm:kr}.

In Section \ref{s:multi} we provide a proof of Kantorovich duality in the multi-marginal setting; see Theorem \ref{thm:optimany}.

\section{Variant of Strassen's theorem}\label{s:strassen}

We begin with the following version of Strassen's theorem \cite[Theorem T51, p. 244]{Meyer}.  We refer the reader also to \cite[Section 19.8]{Aliprantis} and to \cite{Strassen}.

\begin{theorem}\label{thm:varstrassen}
Let $X$ be a separable Banach space, let $(\Omega,\Sigma,\mu)$ be a probability space. Let $\omega\mapsto h_{\omega}$ be a map from $\Omega$ to continuous, sublinear and positively homogeneous functions on $X$, which is weakly measurable, that is, for every $x\in X$ the map $\omega\mapsto h_{\omega}(x)$ is $\Sigma$-measurable, and such that there exists non-negative number $c$ such that
\begin{equation}\label{eqn:ccc}
\abs{h_{\omega}(x)}\leq c\norm{x}\text{ for all }x\in X.
\end{equation}
Set 
\begin{equation*}
h(x)=\int_{\Omega}h_{\omega}(x)\,d\mu(\omega).
\end{equation*}
For a functional $x^*\in X^*$ the following conditions are equivalent:
\begin{enumerate}[i)]
\item\label{i:first} $x^*\leq h$,
\item\label{i:second} there exists a map $\omega\mapsto x_{\omega}^*$ from $\Omega$ to $X^*$ which is weakly measurable, in the sense that $\omega\mapsto x_{\omega}^*(x)$ is measurable for any $x\in X$, and such that $x_{\omega}^*\leq h_{\omega}$ for $\mu$-almost every $\omega\in\Omega$ and for all $x\in X$
\begin{equation*}
x^*(x)=\int_{\Omega}x_{\omega}^*(x)\,d\mu(\omega).
\end{equation*}
\end{enumerate}
\end{theorem}

Before we proceed with applications of Strassen's theorem, let us recall definitions.

\begin{definition}
If $(\Omega,\Sigma)$ and $(\Xi, \Theta)$ are measurable spaces, then a Markov kernel $P$ from $\Omega$ to $\Xi$ is a real function on $\Theta\times \Omega$ such that for any point $\omega\in\Omega$, $P(\cdot,\omega)$ is a probability measure on $\Theta$ and for any $A\in \Theta$, $P(A,\cdot)$ is $\Sigma$-measurable. 

If $\mu$ is a probability measure on $\Sigma$, then we define $P\mu$ to be a probability measure on $\Theta$ such that
\begin{equation*}
P\mu(A)=\int_{\Omega} P(A,\omega)\,d\mu(\omega)\text{ for all }A\in\Theta.
\end{equation*}
\end{definition}

We shall denote by $\mathcal{C}(\Omega)$ the Banach space of bounded continuous functions on a topological space $\Omega$ and by $\mathcal{M}(\Omega)$ the Banach space of signed Borel measures on $\Omega$ normed by total variation. By $\mathcal{P}(\Omega)$ we shall denote the set of Borel probability measures on $\Omega$. A subset $\mathcal{K}$ of $\mathcal{C}(\Omega)$ is said to be stable under maxima provided that the maximum $f\vee g$ of two functions   $f,g\in\mathcal{K}$ belongs to $\mathcal{K}$.

Below we state Strassen's theorem for \emph{balayage cones}, i.e., convex cones that are stable under maxima and contain constants (cf. \cite{Ghoussoub2}). Typically the theorem is stated for compact spaces -- cf. \cite[Section 19.8]{Aliprantis} and \cite[T53, p. 246]{Meyer} -- while we provide a version for locally compact Polish spaces.

\begin{theorem}\label{thm:extr}
Let $\Omega$ be a locally compact Polish space. Let $\mathcal{F}$ be a convex cone in $\mathcal{C}(\Omega)$ that is stable under maxima and contains constants.
Suppose that $\mu,\nu$ are Borel probability measures such that 
\begin{equation}\label{eqn:maj}
\int_{\Omega}f\,d\mu\leq\int_{\Omega} f\,d\nu
\end{equation}
for all $f\in\mathcal{F}$.
Then there exists a Markov kernel $P$ form $\Omega$ to $\Omega$ such that  $\nu=P\mu$ and such that for every $\omega\in\Omega$
\begin{equation*}
\int_{\Omega}f\,d\delta_{\omega}\leq\int_{\Omega} f\,dP(\cdot,\omega)
\end{equation*}
for all $f\in\mathcal{F}$.
Moreover, the set of extreme points of the set of pairs of Borel probability measures $(\mu,\nu)$ that satisfy (\ref{eqn:maj}) is contained in the set of pairs of the form $(\delta_{\omega},\eta)$ for some $\omega\in \Omega$ and some Borel probability measure $\eta$ on $\Omega$.
\end{theorem}
\begin{proof}
Set $X=\mathcal{C}(\Omega)$ to be the Banach space of all continuous bounded functions on $\Omega$. Let $x^*$ be an element of $X^*$ represented by a measure $\nu\in\mathcal{M}(\Omega)$. Set for $\omega\in\Omega$
\begin{equation*}
h_{\omega}(x)=\inf\big\{-y(\omega)\mid y\in\mathcal{F}, -y\geq x\big\}.
\end{equation*}
Then, as $\mathcal{F}$ is a convex cone, $h_{\omega}$ is sublinear and positively homogeneous. It is moreover continuous.
Indeed, 
\begin{equation*}
\abs{h_{\omega}(x)}\leq\norm{x},\text{ for all }\omega\in \Omega \text{ and }x\in X.
\end{equation*}
The lower bound $h_{\omega}(x)\geq -\norm{x}$ follows directly from the definition, while the upper bound $h_{\omega}(x)\leq \norm{x}$ follows as $\mathcal{F}$ contains constants and is closed under maxima.
%
As $\mathcal{C}(\Omega)$ is separable, so is its subset 
\begin{equation*}
\big\{-y\mid y\in\mathcal{K}, -y\geq x\big\}.
\end{equation*}
By the assumption that $\mathcal{F}$ is stable under maxima, $\omega\mapsto h_{\omega}(x)$ is a pointwise limit of a sequence $(-y_k)_{k=1}^{\infty}$ with $y_k\in\mathcal{F}$. We may moreover assume that 
\begin{equation*}
\abs{y_k(\omega)}\leq \norm{x} \text{ for }\omega\in \Omega.
\end{equation*}
By the assumption on $\mu,\nu$
\begin{equation*}
x^*(x)=\int_{\Omega} x\,d\nu \leq \int_{\Omega} (-y_k)(\omega)\,d\nu(\omega)\leq\int_{\Omega}(-y_k)(\omega)\,d\mu(\omega).
\end{equation*}
Now, by the dominated convergence theorem it follows that
\begin{equation*}
x^*(x)\leq \int_{\Omega}h_{\omega}(x)\,d\mu(\omega).
\end{equation*} 
Observe that the Banach space $\mathcal{C}_0(\Omega)$ is separable.  By Theorem \ref{thm:varstrassen} 
we see that there is a weakly measurable function $\omega\mapsto x_{\omega}^*$ with values in $X^*$ such that 
\begin{equation}\label{eqn:rep}
x^*(x)=\int_{\Omega}x_{\omega}^*(x) \,d\mu(\omega)\text{ for all }x\in \mathcal{C}(\Omega)
\end{equation}
and $x_{\omega}'\leq h_{\omega}$ for $\mu$-almost every $\omega\in\Omega$.
Here $x_{\omega}'$ is the restriction of $x_{\omega}^*$ to $\mathcal{C}_0(\Omega)$.

Now $\abs{h_{\omega}(x)}\leq\norm{x}$ implies that
for all $x\in\mathcal{C}_0(\Omega)$
\begin{equation*}
x_{\omega}'(x)\leq \norm{x}.
\end{equation*}
By Riesz' representation theorem, there exists a measure $P(\cdot,\omega)$ on $\Omega$ such that for all $h\in \mathcal{C}_0(\Omega)$ there is 
\begin{equation*}
x_{\omega}'(h)=\int_{\Omega}h\, dP(\cdot,\omega).
\end{equation*}
Choose any $h\in \mathcal{C}(\Omega)$. By Ulam's lemma and Urysohn's lemma there exists a bounded, monotone, sequence of non-negative continuous and compactly supported functions $(\phi_n)_{n\in \mathbb{N}}$ that converges pointwise to constant function $1$. In view of (\ref{eqn:rep}),
\begin{equation*}
\int_{\Omega}h\phi_n\,d\nu=\int_{\Omega}\int_{\Omega} h\phi_n \,dP(\cdot,\omega) \,d\mu(\omega).
\end{equation*}
By the dominated convergence theorem there is 
\begin{equation}\label{eqn:repp}
\int_{\Omega}h\,d\nu=\int_{\Omega}\int_{\Omega} h \,dP(\cdot,\omega)\, d\mu(\omega).
\end{equation}
It follows that $P(\cdot,\omega)$ is a probability measure for $\mu$-almost every $\omega$. 
Since $x_{\omega}'\leq h_{\omega}$ for $\mu$-almost every $\omega\in\Omega$, we see that for all $h\in\mathcal{C}_0(\Omega)$ there is
\begin{equation*}
\int_{\Omega} h\,dP(\cdot,\omega)\leq h_{\omega}(h)\text{ for }\mu\text{-almost every }\omega\in\Omega.
\end{equation*}
As $h_{\omega}$ is monotone on $\mathcal{C}(\Omega)$, we see that the above inequality holds true for $h\in\mathcal{C}(\Omega)$ as well.

Observe that if $g\in -\mathcal{F}$, then by the definition of $h_{\omega}$,
\begin{equation*}
h_{\omega}(g)=g\text{, hence }
\int_{\Omega} h\,dP(\cdot,\omega)\leq  g(\omega).
\end{equation*}
It follows that for all $f\in\mathcal{F}$ there is
\begin{equation}\label{eqn:ex}
\int_{\Omega} f\,d\delta_{\omega}\leq \int_{\Omega} f\,dP(\cdot,\omega).
\end{equation}
%
We see that $P$ defines a Markov kernel from $\Omega$ to $\Omega$. 
By (\ref{eqn:repp}), $\nu=P\mu$ and by (\ref{eqn:ex}) we have
\begin{equation}\label{eqn:modify}
\int_{\Omega} f \,d\delta_{\omega}\leq \int_{\Omega} f \,P(\cdot,\omega)
\end{equation}
for $\mu$-almost every $\omega\in\Omega$ and all $f\in\mathcal{F}$. To obtain the desired Markov kernel we modify $P$ on a measurable set of $\mu$-measure zero of $\omega\in \Omega$ such that the inequality (\ref{eqn:modify}) is not valid for some $f\in\mathcal{F}$ by putting $P(\cdot,\omega)=\delta_{\omega}$.  

We have
\begin{equation*}
(\mu,\nu)=\int_{\Omega} (\delta_{\omega},P(\cdot,\omega))\,d\mu(\omega),
\end{equation*}
so, by (\ref{eqn:ex}), the claim about the extreme points follows.
\end{proof}

\begin{proposition}\label{pro:extr}
Suppose that $\mathcal{K}$ is a convex set that is stable under maxima, contains constants and for any constant $c$ it holds $\mathcal{K}+c\subset\mathcal{K}$. 
Let $f\in\mathcal{K}$. Then the tangent cone to $\mathcal{K}$ at $f$, i.e., the cone
\begin{equation*}
\mathcal{F}=\Big\{\lambda(f-g)\mid g\in\mathcal{K},\lambda\geq 0\Big\}
\end{equation*}
is a convex cone, stable under maxima and contains constants.
\end{proposition}
\begin{proof}
Let $\lambda_1,\lambda_2\geq 0$, $g_1,g_2\in\mathcal{K}$. Set $\lambda=\lambda_1\vee\lambda_2$ and 
\begin{equation*}
g=\Bigg(\frac{\lambda_1}{\lambda} g_1+ \Big(1-\frac{\lambda_1}{\lambda}\Big)f \Bigg)\vee\Bigg(\frac{\lambda_2}{\lambda} g_2+ \Big(1-\frac{\lambda_2}{\lambda}\Big)f \Bigg).
\end{equation*}
Then $g\in\mathcal{K}$, thanks to convexity and stability with respect to maxima of $\mathcal{K}$.
Moreover
\begin{equation}
\Big(\lambda_1(f-g_1)\Big)\vee\Big(\lambda_2(f-g_2)\Big)=\lambda(f-g).
\end{equation}
This shows that $\mathcal{F}$ is stable under maxima. The other two claimed properties of $\mathcal{F}$ are trivial to verify.
\end{proof}

\begin{remark}\label{rem:cone}
If we assume moreover that $\mathcal{K}$ contains a cone $\mathcal{F}$, then condition (\ref{eqn:maj}) implies that for all $g\in\mathcal{F}$ one has 
\begin{equation*}
\int_{\Omega}g \,d(\mu-\nu)\leq 0.
\end{equation*}
This is to say, $\mu$ and $\nu$ are in $\mathcal{F}$-balayage.
\end{remark}

The next corollary extends the above result to the case of a pair $(\mu,\nu)$ of measures on two, possibly distinct, locally compact Polish spaces $X$ and $Y$.

\begin{corollary}\label{col:twospaces}
Let $X,Y$ be locally compact Polish spaces. Let $\mathcal{K}$ be a convex set in $\mathcal{C}(X\cup Y)$ that is stable under maxima, contains constants and for any constant $c$ there is $\mathcal{K}+c\subset\mathcal{K}$. Let $f\in\mathcal{C}(X\cup Y)$. Suppose that $\mu\in\mathcal{P}(X)$ and $\nu\in\mathcal{P}(Y)$ are two Borel probability measures such that
\begin{equation}\label{eqn:majtwo}
\int_X g \,d\mu- \int_Yg \,d\nu\leq \int_X f\, d\mu-\int_Y f \,d\nu
\end{equation}
for all $g\in\mathcal{K}$.
Then there exists a Markov kernel $P$ from $X$ to $Y$ such that $\nu=P\mu$ and such that for $\mu$-almost every $x\in X$ and any $g\in\mathcal{K}$
\begin{equation*}
 \int_Y g \,d(\delta_x-P(\cdot,x))\leq \int_Y f \,d(\delta_x-P(\cdot,x)).
\end{equation*}
Moreover, the set of extreme points of the set of pairs of Borel probability measures that satisfy (\ref{eqn:majtwo})
is contained in the set of pairs of the form $(\delta_x,\eta)$ for some $x\in X$ and some Borel probability measure $\eta$ on $Y$.
\end{corollary}
\begin{proof}
Let $\Omega$ be the disjoint union of $X$ and $Y$. Let $\tilde{\mu},\tilde{\nu}$ be the probability Borel measures in $\mathcal{M}(\Omega)$ that are extensions of $\mu$ and $\nu$ respectively. Then (\ref{eqn:majtwo}) is equivalent to condition that for all $g\in\mathcal{K}$ there is 
\begin{equation*}
\int_{\Omega} g \,d(\tilde{\mu}- \tilde{\nu})\leq \int_{\Omega} f \,d(\tilde{\mu}- \tilde{\nu}).
\end{equation*}
Whence, by Theorem \ref{thm:extr} and Proposition \ref{pro:extr}, there exists a Markov kernel $\tilde{P}$ from $\Omega$ to $\Omega$ such that
\begin{equation*}
\tilde{\nu}=\tilde{P}\tilde{\mu}
\end{equation*}
and such that for every $\omega\in \Omega$ and for all $g\in\mathcal{K}$ there is
\begin{equation}\label{eqn:omega}
 \int_{\Omega}g \,d(\delta_{\omega}-\tilde{P}(\cdot,\omega))\leq \int_{\Omega} f \,d(\delta_{\omega}-\tilde{P}(\cdot,\omega)).
\end{equation}
Let $P(A,x)=\tilde{P}(A,x)$ for $x\in X$ and for any Borel set $A\subset Y$. Then $P$ is a Markov kernel from $X$ to $Y$. For this, observe that 
\begin{equation*}
1=\nu(Y)=\int_X \tilde{P}(Y,x)\,d\mu(x).
\end{equation*}
Hence, for $\mu$-almost every $x\in X$, $P(\cdot,x)$ is a Borel probability measure on $Y$. For $x\in X$ that belong to the complement of this measurable set, we put $P(\cdot,x)=\delta_{y_0}$. It follows that for every $x\in X$ there is $P(X,x)=0$. 
Moreover $\nu=P\mu$. By (\ref{eqn:omega}) it follows that for $\mu$-almost every $x\in X$ and all $g\in\mathcal{K}$ there is
\begin{equation*}
\int_Y g \,d(\delta_x- P(\cdot,x))\leq \int_Y f \,d(\delta_x- P(\cdot,x)).
\end{equation*}
The claim on the extreme points follows readily.
\end{proof}

\section{Martingale optimal transport}\label{s:martingale}

We shall characterise the set of extreme points of two Borel probability measures $\mu,\nu$ in convex order. Recall that two Borel probability measures $\mu,\nu$ on $\mathbb{R}^n$ with finite first moments are said to be in convex order provided that
\begin{equation*}
\int_{\mathbb{R}^n}g\,d\mu\leq\int_{\mathbb{R}^n}g\,d\nu
\end{equation*}
for all convex functions $g\colon\mathbb{R}^n\to\mathbb{R}$. 
Recall also that a set $K\subset\mathbb{R}^n$ is called a convex body provided that it is convex, compact and has non-empty interior.
We say that points $x_1,\dotsc,x_d\in\mathbb{R}^n$ are affinely independent provided that none of these points lies in the affine hull of the others.

\begin{theorem}\label{thm:ex}
Let $K\subset\mathbb{R}^n$ be a convex body. Let $\mathcal{F}$ denote the set of continuous convex functions on $K$. Let $\mathcal{P}$ denote the set of pairs $(\mu,\nu)$ of Borel probability measures on $K$ that are in convex order, that is 
\begin{equation*}
\int_K g\,d(\mu-\nu)\leq 0
\end{equation*}
for all $g\in\mathcal{F}$.
Then the set of extreme points of $\mathcal{P}$ is equal to the set of pairs of the form
\begin{equation}\label{eqn:form}
\Big(\delta_x,\sum_{i=1}^{d+1}\lambda_i\delta_{x_i}\Big)
\end{equation}
where $x=\sum_{i=1}^{d+1}\lambda_ix_i$, $\lambda_i>0$ for $i=1,\dotsc,d+1$ and $\sum_{i=1}^{d+1}\lambda_i=1$, $d\leq n$ and moreover $x_1,\dotsc,x_{d+1}\in K$ are affinely independent.
\end{theorem}
\begin{proof}
By Theorem \ref{thm:extr}, any extreme point of $\mathcal{P}$ is of the form $(\delta_x,\eta)$ for some $x\in K$ and some Borel probability measure $\eta$ on $K$. Moreover, as any affine function belongs to $\mathcal{F}$, we see that 
\begin{equation*}
x=\int_K y\,d\eta(y).
\end{equation*}
Let us fix $x\in K$.
Consider the set $\mathcal{A}$ of all Borel probability measures that have $x$ as their barycentre. To prove the assertion we ought to show that the extreme points of $\mathcal{A}$ are of the form 
\begin{equation*}
\sum_{i=1}^{d+1}\lambda_i\delta_{x_i}
\end{equation*}
for some positive $\lambda_1,\dotsc,\lambda_{d+1}$ that sum up to one, $d\leq n$ and $x_1,\dotsc,x_{d+1}$ affinely independent such that
\begin{equation*}
x=\sum_{i=1}^{d+1}\lambda_ix_i.
\end{equation*}
Let us first show that any extreme point $\gamma\in\mathcal{A}$ is supported on at most $n+1$ points. Suppose conversely, that there exist pairwise disjoint non-empty Borel sets $A_1,\dotsc,A_{n+2}\subset K$ such that 
\begin{equation*}
K=\bigcup_{i=1}^{n+2}A_i\text{ and }\gamma(A_i)>0\text{ for }i=1,\dotsc,n+2.
\end{equation*}
Then there exist real numbers $t_1,\dotsc,t_{n+2}$, not all of them equal, such that 
\begin{equation*}
0=\sum_{i=1}^{n+2}t_i \int_{A_i} (y-x)d\gamma(y).
\end{equation*}
We may assume that these numbers have absolute values all less than one and are such that
\begin{equation*}
-\frac12\leq \sum_{i=1}^{n+2}t_i\gamma(A_i)\leq \frac12.
\end{equation*}
Set 
\begin{equation*}
\gamma_1=\frac{\sum_{i=1}^{n+2}(1-t_i)\gamma|_{A_i}}{1-\sum_{i=1}^{n+2}t_i\gamma(A_i)}\text{ and }\gamma_2=\frac{\sum_{i=1}^{n+2}(1+t_i)\gamma|_{A_i}}{1+\sum_{i=1}^{n+2}t_i\gamma(A_i)}.
\end{equation*}
Then $\gamma_1,\gamma_2$ belong to $\mathcal{A}$. Moreover
\begin{equation*}
\gamma=\frac12\Big(1-\sum_{i=1}^{n+2}t_i\gamma(A_i)\Big)\gamma_1+\frac12\Big(1+\sum_{i=1}^{n+2}t_i\gamma(A_i)\Big)\gamma_2.
\end{equation*}
Thus $(\delta_x,\gamma)$ is not an extreme point of $\mathcal{A}$.
The contradiction yields that $\gamma$ is supported on at most $n+1$ points. 

Let $d+1\leq n+1$ be the number of points in the support. Let us show that we must necessarily have 
\begin{equation*}
\gamma=\sum_{i=1}^{d+1}\lambda_i\delta_{x_i}
\end{equation*}
for some positive numbers $\lambda_1,\dotsc,\lambda_{d+1}$ that sum up to one and $x_1,\dotsc,x_{d+1}$ affinely independent. Suppose that this is not the case. Then there exist
non-negative $\alpha_1,\dotsc,\alpha_{d+1}$, not all of them equal to $\lambda_1,\dotsc,\lambda_{d+1}$, such that
\begin{equation*}
x=\sum_{i=1}^{d+1}\alpha_ix_i\text{ and }\sum_{i=1}^{d+1}\alpha_i=1.
\end{equation*}
Set $\chi=\sum_{i=1}^{d+1}\alpha_i\delta_{x_i}$. Then $\chi\in\mathcal{A}$. Moreover, if $\epsilon\in\Big(0,\min\Big\{\frac{\lambda_1}{\alpha_1},\dotsc,\frac{\lambda_{d+1}}{\alpha_{d+1}},1\Big\}\Big)$, then we may write
\begin{equation*}
\gamma= \frac12(1-\epsilon)\frac{\gamma-\epsilon \chi}{1-\epsilon}+\frac12(1+\epsilon)\frac{\gamma+\epsilon\chi}{1+\epsilon},
\end{equation*}
as a convex combination of two distinct measures in $\mathcal{A}$.
This concludes the proof of the fact that any extreme point of $\mathcal{P}$ is of the form (\ref{eqn:form}).

Let us now show that any pair $\mu,\nu$ of that form is indeed an extreme point of $\mathcal{P}$. Observe that by Jensen's inequality any such pair belongs to $\mathcal{P}$. If we had
\begin{equation*}
(\mu,\nu)=\lambda(\theta_1,\rho_1)+(1-\lambda)(\theta_2,\rho_2)
\end{equation*}
for some $(\theta_1,\rho_1),(\theta_2,\rho_2)\in\mathcal{P}$ and some $\lambda\in (0,1)$,
then necessarily $\theta_1=\theta_2=\mu$, as $\mu$ is supported on a single point $x\in K$, and $\rho_1,\rho_2$ are supported on the support of $\nu$. As the points in the support of $\nu$ are affinely independent and 
\begin{equation*}
x=\int_{\Omega} y\,d \rho_1(y)=\int_{\Omega} y\, d\rho_2(y), 
\end{equation*}
we see that $\rho_1=\rho_2=\nu$.
\end{proof}

In the proof above we could have used a result of Winkler \cite{Winkler}. Instead we follow a direct approach for the sake of completeness and clarity.

The general method presented in this article may be applied to martingale optimal transport. In there one is given two Borel probability measures $\mu,\nu$ on a convex body $K\subset\mathbb{R}^n$ which are in convex order. The task is to find 
a coupling $\pi$ of $\mu$ and $\nu$ such that it is a distribution of a one-step martingale and that minimises the integral
\begin{equation*}
\int_{K\times K}c\,d\pi
\end{equation*}
among all such couplings. Here $c\colon K\times K\to\mathbb{R}$ is a given Borel measurable function, called a cost function. 

In the theorem below we shall employ the above characterisation of extreme points to prove a duality result for the multi-dimensional martingale optimal transport problem, provided that the value of the dual problem is attained. We also provide a novel formulation of both the primal and the dual problem.

For other results related to duality in the martingale optimal transport problem see results of Beiglb\"ock, Cox \cite{Beiglbock} and of Beiglb\"ock, Tim, Ob\l\'oj \cite{Beiglbock2}. 

Below we shall consider continuous functions $g\in\mathcal{C}(K\cup K)$ on the disjoint union of two copies of $K$. For such a function we shall denote by $g_1$ and $g_2$ the restrictions of $g$ to the first and to the second copy of $K$ respectively.

\begin{theorem}\label{thm:dualmart}
Let $K$ be a convex body in $\mathbb{R}^n$ and let $c\colon K\times K\to\mathbb{R}$ be a Lipschitz function. Let $\mu,\nu$ be two Borel probability measures on $K$ in convex order. Let $\mathcal{K}$ denote the set of continuous functions $g$ on the disjoint union of two copies of $K$ such that for all non-negative $\lambda_1,\dotsc,\lambda_{n+1}$ that add up to one and all $x_1,\dotsc,x_{n+1}\in K$ there is
\begin{equation*}
g_1\Big(\sum_{i=1}^{n+1}\lambda_i x_i\Big)-\sum_{i=1}^{n+1}\lambda_ig_2(x_i)\leq \sum_{i=1}^{n+1}\lambda_i c\Big(\sum_{j=1}^{n+1}\lambda_jx_j,x_i\Big).
\end{equation*}
Let $\mathcal{P}$ denote the set of pairs of Borel probability measures on $K$ that are in convex order.
Suppose that the supremum of integrals
\begin{equation*}
\int_Kf_1 \,d\mu-\int_K f_2\,d\nu
\end{equation*}
taken over the set $\mathcal{K}$ is attained. Then it is equal to the infimum of integrals
\begin{equation}\label{eqn:optimum}
\int_{\mathcal{E}}\int_{K\times K} c\,d(\xi_1\otimes \xi_2)\,d\pi(\xi)
\end{equation}
over the set of all Borel probability measures $\pi$ on the set $\mathcal{E}$ of extreme points of $\mathcal{P}$ such that
\begin{equation*}
(\mu,\nu)=\int_{\mathcal{E}}\xi \,d\pi(\xi).
\end{equation*}
Moreover the infimum is attained. It is also equal to the infimum of integrals
\begin{equation}\label{eqn:optimum2}
\int_{K\times K}\ c\,d\pi
\end{equation}
over all $\pi\in \Theta(\mu,\nu)$. Here $\Theta(\mu,\nu)$ stands for the set of all Borel probability measures on $K\times K$ such that its marginals are $\mu,\nu$ and that are distributions of a one-step martingale.
\end{theorem}

\begin{lemma}\label{lem:extrememart}
Let $K$ be a convex body in $\mathbb{R}^n$ and let $c\colon K\times K\to\mathbb{R}$ be a Lipschitz function.
Let $f\in \mathcal{K}$. 
Then the set of extreme points of the set $\mathcal{R}$ of pairs of Borel probability measures $(\mu,\nu)\in\mathcal{P}(K)\times\mathcal{P}(K)$ that are in convex order and such that
\begin{equation*}
\int_K g_1\,d\mu-\int_Kg_2\,d\nu\leq\int_K f_1\,d\mu-\int_Kf_2\,d\nu
\end{equation*}
for all $g\in\mathcal{K}$ is equal to the set of pairs of the form $(\delta_x,\sum_{i=1}^{d+1}\lambda_i x_i)$ for some $d\leq n$, $\lambda_1,\dotsc,\lambda_{d+1}$ positive that sum up to one, $x_1,\dotsc,x_{d+1}\in K$ affinely independent, $x=\sum_{i=1}^{d+1}\lambda_ix_i$, such that
\begin{equation*}
f_1\Big(\sum_{i=1}^{d+1}\lambda_i x_i\Big)-\sum_{i=1}^{d+1}\lambda_if_2(x_i)= \sum_{i=1}^{d+1}\lambda_ic\Big(\sum_{j=1}^{d+1}\lambda_jx_j,x_i\Big).
\end{equation*}
\end{lemma}
\begin{proof}
The set $\mathcal{K}$ is convex, stable under maxima, contains constants and for any $t\in\mathbb{R}$ there is $\mathcal{K}+t\subset\mathcal{K}$.
Thus, by Theorem \ref{thm:extr} and Proposition {pro:extr}, any extreme point of $\mathcal{R}$ is of the form $(\delta_{x_0},\eta)$ for some $x_0\in K$ and a Borel probability measure $\eta$ on $K$. Let $(\delta_{x_0},\eta)\in\mathcal{R}$ be such an extreme point. 
Let $h_0\in\mathcal{C}(K)$ be a convex continuous function on $K$. Let $h$ be a function equal to $h_0$ on the first copy of $K$ and equal to the same function $h_0$ on the other copy of $K$. Then $f+h\in\mathcal{K}$. Thus $\eta$ majorises $\delta_{x_0}$ in the convex order. Then we know that for any $g\in \mathcal{K}$ we have
\begin{equation}\label{eqn:bm}
\int_K \big(g_1(x_0)-g_2(y)\big)\,d\eta(y)\leq\int_K\big( f_1(x_0)-f_2(y)\big))\,d\eta(y).
\end{equation}
As $f\in\mathcal{K}$, the right-hand side of the above inequality is bounded above by 
\begin{equation}\label{eqn:c}
\int_K c(x_0,y)\,d\eta(y).
\end{equation}
Indeed, as $\eta$ majorises $\delta_{x_0}$ in the convex order, by Theorem \ref{thm:ex}, there exists a Borel probability measure on the set of extreme points $\mathcal{E}$ of $\mathcal{P}$ such that
\begin{equation}\label{eqn:pip}
(\delta_{x,_0}\eta)=\int_{\mathcal{E}}\xi \,d\pi(\xi).
\end{equation}
The fact that $f\in\mathcal{K}$ may be rephrased by
\begin{equation*}
\int_K f_1d\xi_1-\int_K f_2\,d\xi_2 \leq \int_{K\times K} c\,d(\xi_1\otimes \xi_2)
\end{equation*}
for all $\xi\in\mathcal{E}$.
The fact that (\ref{eqn:bm}) is bounded by (\ref{eqn:c}) follows by the integration against $\pi$.

By the McShane extension formula (see \cite{McShane}), we may assume that $c$ is defined and Lipschitz on $\mathbb{R}^n\times\mathbb{R}^n$.
Define $g$ so that for $y\in K$ we have $g_2(y)=-c(x_0,y)$ and for $x\in K$ set
\begin{equation*}
g_1(x)=\inf\Big\{ \sum_{i=1}^{n+1}\lambda_i \big(c(x,y_i)-c(x_0,y_i)\big)\mid \Big(\delta_x,\sum_{i=1}^{d+1}\lambda_i\delta_{y_i}\Big)\in \mathcal{E}\Big\}.
\end{equation*}
Here the infimum is over all pairs of measures in $\mathcal{E}$.
Then 
\begin{equation*}
g_1(x)-\sum_{i=1}^{n+1}\lambda_ig_2(y_i)\leq \sum_{i=1}^{n+1}\lambda_ic(x,y_i)
\end{equation*}
for all $y_1,\dotsc,y_{n+1}\in K$, all non-negative $\lambda_1,\dotsc,\lambda_{n+1}\geq 0$ summing up to one, with $x=\sum_{i=1}^{n+1}\lambda_iy_i$.  
Moreover, $g_1(x_0)=0$. We claim that $g$ is Lipschitz.
Indeed, for any $x,y\in K$ and any $y_1,\dotsc,y_{n+1}\in\mathbb{R}^n$ and non-negative $\lambda_1,\dotsc,\lambda_{n+1}$ that sum up to one and such that $y=\sum_{i=1}^{n+1}\lambda_iy_i$ we have
\begin{align*}
 &\sum_{i=1}^{n+1}\lambda_i \big(c(x,y_i+x-y))-c(x_0,y_i+x-y)\big)\leq\\
 &\leq   \sum_{i=1}^{n+1}\lambda_i \big(c(y,y_i)-c(x_0,y_i)\big)+3L\norm{x-y},
\end{align*}
where $L$ is the Lipschitz constant of $c$. Thus 
\begin{equation*}
g_1(x)\leq g_1(y)+3L\norm{x-y}.
\end{equation*}
This shows that $g_1$ is Lipschitz, hence continuous. In consequence, $g\in\mathcal{K}$.
Observe that, for such $g$, 
\begin{equation*}
\int_K \big(g_1(x_0)-g_2(y)\big)\,d\eta(y)=\int_K c(x_0,y)\,d\eta(y).
\end{equation*}
Let $\pi$ be as in (\ref{eqn:pip}). It follows by (\ref{eqn:bm}) that for $\pi$-almost every $\xi$ we have
\begin{equation*}
\int_K (f_1(x_0)-f_2(y)\big)\,d\xi_2(y)=\int_Kc(x_0,y)\,d\xi_2(y),
\end{equation*}
where $\delta_{x_0}=\xi_1$. Hence $\pi$-almost every $\xi\in\mathcal{E}$ belongs to $\mathcal{R}$.
Therefore any extreme point of $\mathcal{R}$ is necessary an extreme point of $\mathcal{P}$. The assertion follows readily.
\end{proof}

\begin{proof}[of Theorem \ref{thm:dualmart}]
First part of the theorem follows directly from Lemma \ref{lem:extrememart} and Choquet's theorem. For a proof of the second part, take a Borel probability measure $\pi_0$ on $\mathcal{E}$ that attains the infimum (\ref{eqn:optimum}). Set
\begin{equation*}
\pi=\int_{\mathcal{E}}\xi_1\otimes \xi_2 \,d\pi_0(\xi).
\end{equation*}
Then $\pi\in \Theta(\mu,\nu)$ is optimal for (\ref{eqn:optimum2}).
\end{proof}

\section{Dual problem in martingale optimal transport}\label{s:duality}

Let $\mu,\nu$ be Borel probability measures on $\mathbb{R}^n$ with finite first moments that are in convex order.
Martingale optimal transport problem between $\mu$ and $\nu$  admits a dual problem, which is to find the supremum of integrals
\begin{equation*}
\int_{\mathbb{R}^n} f_1 \,d\mu-\int_{\mathbb{R}^n}f_2 \,d\nu
\end{equation*}
taken over the set of all continuous functions $f_1,f_2\in\mathcal{C}(\mathbb{R}^n)$ such that 
\begin{equation*}
f_1(x)-f_2(y)\leq c(x,y)+\langle \gamma(x),y-x\rangle\
\end{equation*}
for all $x,y\in \mathbb{R}^n$ and for some map $\gamma\colon \mathbb{R}^n\to\mathbb{R}^n$. In this section we investigate this class of functions. We prove that this class of functions is equal to the class considered in the previous section.

\begin{definition}
Let $K\subset\mathbb{R}^n$ be a convex set. Then $F\subset K$ is a \emph{face} of $K$ if for any $z\in F$ and any $t\in (0,1)$ such that $z=tx+(1-t)y$ for some $x,y\in K$ we have $x,y\in F$.
\end{definition}

Let us observe that for any $x\in K$, there exists the minimal face of $K$ containing $x$. This holds true by the fact that $K$ itself is a face and that the intersection of a family of faces is a face. Uniqueness of minimal faces follows again by the property that the intersection of two faces is a face. Note that by the Hahn--Banach theorem it follows that $x$ belongs to the relative interior of the minimal face that contains $x$.

\begin{lemma}\label{lem:on}
Let $K$ be a convex body in $\mathbb{R}^n$. Let $c\colon K\times K\to\mathbb{R}$ be a continuous function. Let $f\in\mathcal{C}(K\cup K)$ be a continuous function on a disjoint union of two copies of $K$. Let $x\in K$. The following conditions are equivalent:
\begin{enumerate}[i)]
\item\label{i:fcone} for all $x_1,\dotsc,x_{n+1}\in K$ and all non-negative $\lambda_1,\dotsc,\lambda_{n+1}$ that sum up to one and such that $x=\sum_{i=1}^{n+1}\lambda_ix_i$ there is
\begin{equation*}
 f_1(x)-\sum_{i=1}^{n+1}\lambda_i f_2(x_i)\leq\sum_{i=1}^{n+1}\lambda_ic\Big(\sum_{j=1}^{n+1}\lambda_jx_j,x_i\Big),
\end{equation*}
\item\label{i:gammacone} there exists $\gamma\in\mathbb{R}^n$ such that for all $y\in K$ in the minimal face of $K$ that contains $x$ we have
\begin{equation*}
f_1(x)-f_2(y)\leq c(x,y)+\langle \gamma(x), y-x\rangle.
\end{equation*}
\item\label{i:martcone} for any random variable $X$ with values in $K$ and barycentre $x$ there is
\begin{equation*}
\mathbb{E}\big( f_1(x)-f_2(X)\big)\leq \mathbb{E}c(x,X).
\end{equation*}
\end{enumerate} 
\end{lemma}
\begin{proof}
Take any random variable $X$ with values in $K$ and barycentre $x$. Then there exists $\pi_0$ -- a Borel probability measure on the set $\mathcal{E}$ of extreme points of the set of pairs of Borel probability measures that have $x$ as their barycentre such that
\begin{equation*}
\int_{\mathcal{E}}\xi \,d\pi_0(\xi)
\end{equation*}
is a distribution of $X$. 
Note that, by Theorem \ref{thm:ex}, \ref{i:fcone} tells us that for any $\xi\in\mathcal{E}$ with there is 
\begin{equation*}
f_1(x)-\int_K f_2\,d\xi\leq \int_K c(x,\cdot) \, d\xi.
\end{equation*}
Therefore
\begin{align*}
\mathbb{E}\big(f_1(x)-f_2(X)\big)&=\int_{\mathcal{E}}\Big( f_1(x)-\int_K f_2\,d\xi  \Big)\,d\pi_0(\xi)\leq\\
&\leq \int_{\mathcal{E}} \int_K c(x,\cdot)\,d \xi \,d\pi_0(\xi)=\mathbb{E} c(x,X).
\end{align*}
This is to say, \ref{i:fcone} implies \ref{i:martcone}. 

For the converse implication, take any $x_1,\dotsc,x_{n+1}\in K$ and any non-negative $\lambda_1,\dotsc,\lambda_{n+1}$ that sum up to one with $x=\sum_{i=1}^{n+1}\lambda_ix_i$. Let $X=x_i$ with probability $\lambda_i$ for $i=1,\dotsc,n+1$. Then \ref{i:fcone} follows from an application of \ref{i:martcone} for the random variable $X$.

For the proof of the other equivalences, without loss of generality, we may assume that $c$ is non-negative. Let us denote the set of continuous functions on $K\cup K$ that satisfy condition \ref{i:fcone}  by $\mathcal{K}_1$ and the set of continuous functions on $K\cup K$ that satisfy condition \ref{i:gammacone} by $\mathcal{K}_2$. Observe that trivially $\mathcal{K}_2\subset \mathcal{K}_1$ and that $\mathcal{K}_2$ is a closed, convex set in $\mathcal{C}(K\cup K)$.
%

Suppose that there exists $f\in \mathcal{K}_1\setminus \mathcal{K}_2$. Then by the Hahn--Banach theorem there exists a Borel measure $\eta\in\mathcal{M}(K\cup K)$ such that for all $g\in\mathcal{K}_2$
\begin{equation}\label{eqn:assuc}
\int_{K\cup K} f\,d\eta> \int_{K\cup K} g\,d\eta.
\end{equation}
Since constant functions belong to $\mathcal{K}_2$, measure $\eta$ may be written as a difference of two Borel measures of equal masses. Without loss of generality we may assume that these measures are probabilities. Since any continuous function that is non-positive at $\{x\}$ on the first copy of $K$ and non-negative on the second copy of $K$ belongs to $\mathcal{K}_2$, we see that $\eta=\delta_x-\nu$, for some non-negative $\nu$ supported on the second copy of $K$. Here $\delta_x$ is supported on the first copy of $K$.

Observe that if $h_0\in\mathcal{C}(K)$ is any convex function then $h\in\mathcal{C}(K\cup K)$ such that $h_1=h_2=h_0$ belongs to $\mathcal{K}_2$. Thus $\delta_x$ and $\nu$ are in convex order, i.e., $x$ is the barycentre of $\nu$.

Define $k\colon K\cup K\to\mathbb{R}$ by 
$k_2(y)=-c(x,y)$ for $y\in K$  and for $z\in K$ set
\begin{equation*}
k_1(z)=0.
\end{equation*} 
Then $k$ is continuous by continuity of $c$ and thus $k\in \mathcal{K}_2$, with $\gamma$ equal to zero.  It follows that 
\begin{equation}\label{eqn:bouc}
 \int_K c(x,y) \,d\nu(y)=\int_K \big(k_1(x)-k_2\big)\,d\nu< \int_K \big(f_1(x)-f_2\big)\,d\nu.
\end{equation}
Recall that $(\delta_x,\nu)$ has barycentre $x$. Thus, there exists a probability measure $\pi$ on the set $\mathcal{E}$ of extreme points of measures with barycentre $x$ such that
\begin{equation*}
(\delta_x,\nu)=\int_{\mathcal{E}}\xi \,d\pi(\xi).
\end{equation*}
It follows, by Theorem \ref{thm:ex}, and the definition of $\mathcal{K}_1$, that
\begin{equation*}
\int_K \big(f_1(x)-f_2\big)\,d\nu \leq \int_K  c(x,y)\,d\nu(y).
\end{equation*}
This stands in contradiction to (\ref{eqn:bouc}) and proves that $f\in\mathcal{K}_2$. This is to say $\mathcal{K}_1=\mathcal{K}_2$.
\end{proof}

The above lemma implies also the characterisation for the case of arbitrary convex set $K\subset\mathbb{R}^n$, not necessarily a compact one.

\begin{theorem}\label{thm:col}
Let $K$ be a convex set in $\mathbb{R}^n$. Suppose that $c\colon K\times K\to\mathbb{R}$ is a continuous function. Let $f$ be a continuous function on a disjoint union $K\cup K$. Let $x\in K$. The following conditions are equivalent:
\begin{enumerate}[i)]
\item\label{i:fcc} for all $x_1,\dotsc,x_{n+1}\in K$ and all non-negative $\lambda_1,\dotsc,\lambda_{n+1}$ that sum up to one and such that $x=\sum_{i=1}^{n+1}\lambda_ix_i$ there is
\begin{equation*}
 f_1(x)-\sum_{i=1}^{n+1}\lambda_i f_2(x_i)\leq\sum_{i=1}^{n+1}\lambda_ic\Big(\sum_{j=1}^{n+1}\lambda_jx_j,x_i\Big),
\end{equation*}
\item\label{i:gammacc} there exists $\gamma\in\mathbb{R}^n$ such that for all $y\in K$ in the minimal face of $K$ that contains $x$ we have
\begin{equation*}
f_1(x)-f_2(y)\leq c(x,y)+\langle \gamma(x), y-x\rangle.
\end{equation*}
\item\label{i:martcc} for any bounded random variable $X$ with values in $K$ and barycentre $x$ there is
\begin{equation*}
\mathbb{E}\big( f_1(x)-f_2(X)\big)\leq \mathbb{E}c(x,X).
\end{equation*}
\end{enumerate} 
\end{theorem}
\begin{proof}
Assume first that $x\in\mathrm{int}K$.
Choose a increasing sequence $(K_n)_{n=1}^{\infty}$ of compact convex subsets of $K$ such that its union is $\mathrm{int}K$. Suppose that $f$ is continuous and satisfies \ref{i:fcc}. Let $\epsilon>0$ be such that $B(x,\epsilon)\subset \mathrm{int}K$. Here $B(x,\epsilon)$ denotes the closed ball of radius $\epsilon$ centred at $x$. Then by Lemma \ref{lem:on} for any $n\in\mathbb{N}$ sufficiently large so that $x\in\mathrm{int}K_n$ there exists $\gamma_n$ such that for all $y\in K_n$ there is
\begin{equation}\label{eqn:costam}
f_1(x)-f_2(y)\leq c(x,y)+\langle\gamma_n, y-x\rangle.
\end{equation}
Let $n_0$ be such that $B(x,\epsilon)\subset K_{n_0}$. Take $n>n_0$. Suppose that $\gamma_n\neq 0$ and set $y_n=x-\epsilon\frac{\gamma_n}{\norm{\gamma_n}}$. Then $y_n\in K_{n_0}\subset K_n$ and therefore, by (\ref{eqn:costam}),
\begin{equation*}
\norm{\gamma_n}\leq\frac1{\epsilon}\big( c(x,y_n)-f_1(x)+f_2(y_n)\big).
\end{equation*}
As $c$ is bounded on $\{x\}\times K_{n_0}$ and $f$ is bounded on $K_{n_0}$, the right-hand side of the above inequality is bounded. Hence, so is the left-hand side. We may therefore pick $\gamma$ that is an accumulation point of the sequence $(\gamma_n)_{n=1}^{\infty}$. From (\ref{eqn:costam}) and from continuity of $f$ it follows now that for all $y\in K$
\begin{equation*}
f_1(x)-f_2(y)\leq c(x,y)+\langle\gamma, y-x\rangle.
\end{equation*}
This is to say, $f$ satisfies also \ref{i:gammacc} if $x\in\mathrm{int}K$. If $\gamma_n=0$ for infinitely many $n$, then the above inequality holds true with $\gamma=0$.

In general case, let $L$ denote the minimal face of $K$ that contains $x$. Then $x$ belongs to the relative interior of $L$. We repeat the above argument with $K$ replaced by $L$ considered as a convex subset of its affine hull.

That \ref{i:gammacc} implies \ref{i:fcc} is straightforward.

The equivalence of \ref{i:fcc} and \ref{i:martcc} follows readily from Lemma \ref{lem:on}.
\end{proof}

\begin{remark}
We see that the classes of functions considered in Section \ref{s:martingale} are characterised by the fact that corresponding conditions \ref{i:fcone} hold true for any $x\in K$. 
\end{remark}

\section{Martingale triangle inequality}\label{s:triangle}

In this section we introduce the notion of \emph{martingale triangle inequality} for cost functions $c\colon K\times K\to\mathbb{R}$, where $K\subset\mathbb{R}^n$ is a convex set. We shall show that if it is satisfied by a cost function $c$, which vanishes on the diagonal, then one may take $f_1=f_2$ in the dual problem to the martingale optimal transport. 

\begin{definition}\label{defin:martingale}
Let $K\subset\mathbb{R}^n$ be a convex set. Let $c\colon K\times K\to\mathbb{R}$. We say that $c$ satisfies \emph{martingale triangle inequality}  provided that for all $x,x_1,\dotsc,x_{n+1}\in K$ and all non-negative $\lambda_1,\dotsc,\lambda_{n+1}$ that sum up to one there is
\begin{equation}\label{eqn:con}
\sum_{i=1}^{n+1}\lambda_i c(x,x_i)-c\Big(x,\sum_{i=1}^{n+1}\lambda_ix_i\Big)\leq \sum_{i=1}^{n+1} \lambda_i c\Big( \sum_{j=1}^{n+1}\lambda_j x_j,x_i\Big).
\end{equation}
\end{definition}

In other words, for any $x\in K$ function $-c(x,\cdot)$ satisfies condition \ref{i:fccc} of Theorem \ref{thm:col}, with cost function $c$.

\begin{remark}
Condition (\ref{eqn:con}) is satisfied if $c$ is a metric on $K$ and also it is satisfied if $c$ is concave in the second variable and non-negative. Also, this condition defines a closed convex cone of functions.
Note also that for a function given by $c(x,y)=\norm{x-y}^2$ for $x,y\in K$, where $\norm{\cdot}$ denotes Euclidean norm on $K$, we have equality in (\ref{eqn:con}).
\end{remark}

The following theorem is a martingale optimal transport analogue of Kantorovich--Rubinstein duality for the classical optimal transport problem with a metric cost function, see Section \ref{s:Kant-Rub}.

\begin{theorem}\label{thm:clc}
Let $K$ be a convex body in $\mathbb{R}^n$. Let $c\colon K\times K\to\mathbb{R}$ be a continuous function satisfying martingale triangle inequality and vanishing on the diagonal.

Let $\mathcal{B}_1$ denote the set of functions $f\in\mathcal{C}(K)$ such that for all $x_1,\dotsc,x_{n+1}\in K$ and all non-negative $\lambda_1,\dotsc,\lambda_{n+1}$ that sum up to one there is
\begin{equation*}
f\Big(\sum_{i=1}^{n+1}\lambda_ix_i\Big)- \sum_{i=1}^{n+1}\lambda_i f(x_i)\leq\sum_{i=1}^{n+1}\lambda_ic\Big(\sum_{j=1}^{n+1}\lambda_jx_j,x_i\Big).
\end{equation*}

Let $\mathcal{B}_2$ denote the set of functions $g\in\mathcal{C}( K\cup K)$ on the disjoint union of two copies of $K$ such that for all $x_1,\dotsc,x_{n+1}\in K$ and all non-negative $\lambda_1,\dotsc,\lambda_{n+1}$ that sum up to one there is
\begin{equation*}
g_1\Big(\sum_{i=1}^{n+1}\lambda_ix_i\Big)- \sum_{i=1}^{n+1}\lambda_i g_2(x_i)\leq\sum_{i=1}^{n+1}\lambda_ic\Big(\sum_{j=1}^{n+1}\lambda_jx_j,x_i\Big).
\end{equation*}
Then, for any Borel probability measures $\mu,\nu$ on $K$ in convex order there is
\begin{equation}\label{eqn:supremumm}
\sup\Big\{\int_K f \,d(\mu-\nu)\mid f\in \mathcal{B}_1\Big\}=\sup\Big\{\int_K g_1 \,d\mu-\int_K g_2 \,d\nu\mid g\in \mathcal{B}_2\Big\}.
\end{equation}
\end{theorem}
\begin{proof}
Clearly, the supremum on the right-hand side of (\ref{eqn:supremumm}) is at least the supremum on the left-hand side of (\ref{eqn:supremumm}), as if $f\in\mathcal{B}_1$, then $g\colon K\cup K\to\mathbb{R}$ defined by $g_1=f$ and $g_2=f$ belongs to $\mathcal{B}_2$.

Suppose that there exists $\epsilon>0$ such that for any function $f\in\mathcal{B}_1$
\begin{equation*}
\int_K f \,d(\mu-\nu)+2\epsilon\leq \sup\Big\{\int_K g_1\, d\mu-\int_K g_2 \,d\nu\mid g\in \mathcal{B}_2\Big\}.
\end{equation*}
It follows that there exists $g\in\mathcal{B}_2$ such that for all $f\in\mathcal{B}_1$ we have 
\begin{equation}\label{eqn:eps}
\int_K f \,d(\mu-\nu)+\epsilon\leq \int_K g_1 \,d\mu-\int_K g_2 \,d\nu.
\end{equation}
By Corollary \ref{col:twospaces}, any extreme point of pairs of probability measures $\mathcal{P}$ that satisfy 
\begin{equation*}
\int_K f \,d(\mu-\nu)\leq \int_K g_1\, d\mu-\int_K g_2 \,d\nu.
\end{equation*}
for all $f\in\mathcal{B}_1$ is of the form $(\delta_{x_0},\eta)$ for some $x_0\in K$ and a Borel probability measure $\eta$. Hence, by (\ref{eqn:eps}), for some $x_0\in K$ and some $\eta\in\mathcal{P}(K)$ and all $f\in\mathcal{B}_1$
\begin{equation}\label{eqn:epsprim}
\int_K(f(x_0)- f)\,d\eta+\epsilon\leq \int_K \big( g_1(x_0)-g_2\big)\,d\eta.
\end{equation}
Note that any such pair $(\delta_{x_0},\eta)$ is in convex order. Thus there exists a Borel probability measure $\pi_0$ on the set $\mathcal{E}$ of extreme points of pairs of measures in convex order such that 
\begin{equation}\label{eqn:repres}
(\delta_{x_0},\eta)=\int_{\mathcal{E}}\xi \,d\pi_0(\xi).
\end{equation}
But, as $g\in\mathcal{B}_2$, for any $\xi\in\mathcal{E}$ we have 
\begin{equation*}
\int_{K\times K} (g_1(x)-g_2(y))\,d(\xi_1\otimes\xi_2)(x,y)\leq \int_{K\times K} c\, d(\xi_1\otimes \xi_2).
\end{equation*}
Take $f=-c(x_0,\cdot)$. Then, by the martingale triangle inequality, $f\in\mathcal{B}_1$. This, together wih (\ref{eqn:repres}) and (\ref{eqn:epsprim}), yields a contradiction. 
\end{proof}

\section{Continuity properties}\label{s:continuity}

We shall now investigate continuity properties of the class of functions considered in Theorem \ref{thm:clc}. We adapt the argument of \cite{Wayne}. We shall need the following lemma.

\begin{lemma}\label{lem:onedim}
Suppose that $f\colon [a,b]\to\mathbb{R}$ is such that for all $\lambda\in [0,1]$ and all $x,y\in [a,b]$ there is
\begin{equation*}
\lambda f(x)+(1-\lambda)f(y)-f(\lambda x+(1-\lambda)y)\leq \lambda c(\lambda x+(1-\lambda)y, x)+(1-\lambda)c(\lambda x+(1-\lambda)y,y).
\end{equation*}
Then for all $a\leq x_1<x_2<x_3\leq b$ the quotient $\frac{f(x_3)-f(x_1)}{x_3-x_1}$ is bounded below by 
\begin{equation*}
\frac{f(x_3)-f(x_2)}{x_3-x_2}+\frac{c(x_2,x_3)-c(x_2,x_1)}{x_3-x_1}-\frac{c(x_2,x_3)}{x_3-x_2}
\end{equation*}
and above by
\begin{equation*}
\frac{f(x_2)-f(x_1)}{x_2-x_1}+\frac{c(x_2,x_3)-c(x_2,x_1)}{x_3-x_1}+\frac{c(x_2,x_1)}{x_2-x_1}.
\end{equation*}
\end{lemma}
\begin{proof}
Let $\lambda\in (0,1)$ be such that $x_2=\lambda x_1+(1-\lambda)x_3$, that is 
\begin{equation*}
\lambda=\frac{x_3-x_2}{x_3-x_1}.
\end{equation*}
Then we know that
\begin{equation*}
\lambda f(x_1)+(1-\lambda)f(x_3)-f(x_2)\leq \lambda c(x_2,x_1)+(1-\lambda)c(x_2,x_3).
\end{equation*}
Hence putting formula for $\lambda$ we obtain that
\begin{equation*}
\frac{f(x_3)-f(x_2)}{x_3-x_2}+\frac{c(x_2,x_3)-c(x_2,x_1)}{x_3-x_1}-\frac{c(x_2,x_3)}{x_3-x_2}\leq \frac{f(x_3)-f(x_1)}{x_3-x_1}
\end{equation*}
and
\begin{equation*}
\frac{f(x_3)-f(x_1)}{x_3-x_1}\leq\frac{f(x_2)-f(x_1)}{x_2-x_1}+\frac{c(x_2,x_3)-c(x_2,x_1)}{x_3-x_1}+\frac{c(x_2,x_1)}{x_2-x_1}.
\end{equation*}
\end{proof}

\begin{theorem}\label{thm:lipm}
Let $K$ be a convex, open set in $\mathbb{R}^n$. Suppose that $f\colon K\to\mathbb{R}$ is such that for all $x,y\in K$ and all $\lambda\in [0,1]$ there is
\begin{equation*}
\lambda f(x)+(1-\lambda)f(y)-f(\lambda x+(1-\lambda)y)\leq  \lambda c(\lambda x+(1-\lambda)y, x)+(1-\lambda)c(\lambda x+(1-\lambda)y,y).
\end{equation*}
Suppose that $c$ is $L$-Lipschitz in the second variable and is such that for all $x,y\in K$ there is $\abs{c(x,y)}\leq \Lambda\norm{x-y}$ for some constant $\Lambda$. Then $f$ is locally Lipschitz in $K$.
\end{theorem}
\begin{proof}
Suppose that $n=1$. Then, without loss of generality, $K=[a,d]$ for some $a<d$. Choose numbers $b,c$ so that $a<b<c<d$. Then applying Lemma \ref{lem:onedim} four times yields that for any $x,y$ such that $b<x<y<c$ we have
\begin{equation*}
\frac{f(y)-f(x)}{y-x}\leq \frac{f(b)-f(a)}{b-a}+\frac{c(x,y)}{y-x}+\frac{c(b,a)}{b-a}+\frac{c(b,y)-c(b,a)}{y-a}-\frac{c(x,y)-c(x,a)}{y-a}
\end{equation*}
and
\begin{equation*}
\frac{f(d)-f(c)}{d-c}-\frac{c(c,x)}{c-x}+\frac{c(c,d)-c(c,x)}{d-x}-\frac{c(y,x)}{y-x}-\frac{c(y,d)-c(y,x)}{d-x}\leq\frac{f(y)-f(x)}{y-x}.
\end{equation*}
In particular on $[b,c]$ function $f$ has Lipschitz constant at most
\begin{equation*}
\max\Big\{\Big|\frac{f(b)-f(a)}{b-a}+2L+2\Lambda\Big|,\Big|\frac{f(d)-f(c)}{d-c}-2L-2\Lambda\Big|\Big\}.
\end{equation*}
Suppose now that $n>1$ and that, by induction, the lemma holds true for all dimensions at most $n-1$. Choose any simplices $X,Y$ and any ball $B$ in $K$ and such that $B\subset X\subset Y\subset K$ and such that $B$ and the boundaries of $X$ and $Y$ are pairwise disjoint. Then, by the inductive assumption, $f$ is continuous on the boundaries of $X$ and $Y$, and therefore the function
\begin{equation*}
\partial X\times \partial Y\ni (x,y)\mapsto\frac{\abs{f(x)-f(y)}}{\norm{x-y}}\in\mathbb{R}
\end{equation*}
is bounded by a constant $M$. Choose any points $x,y\in B$. Choose a unique line passing through $x$ and $y$. Then there exist unique points $x_1,x_2\in X$ and $y_1,y_2\in Y$ such that the line intersects $\partial X$ in $x_1,x_2$ and $\partial Y$ in $y_1,y_2$ where, without loss of generality, 
\begin{equation*}
y_1<x_1<x<y<x_2<y_2
\end{equation*}
on the line. By Lemma \ref{lem:onedim} we see that 
\begin{equation*}
\frac{\abs{f(y)-f(x)}}{\norm{x-y}}\leq\max\Big\{\Big|\frac{f(y_2)-f(x_2)}{\norm{y_2-x_2}}-2L-2\Lambda\Big|,\Big|\frac{f(x_1)-f(y_1)}{\norm{y_2-x_2}}+2L+2\Lambda\Big|\Big\}\
\end{equation*}
Therefore $f$ has Lipschitz constant at most $M+2L+2\Lambda$ on $B$.
\end{proof}

\begin{theorem}\label{thm:onefunction}
Let $K$ be a convex set in $\mathbb{R}^n$. Suppose that $c\colon K\times K\to\mathbb{R}$ is a continuous function. Let $x\in K$ and let $f$ be a continuous function on $K$. The following conditions are equivalent:
\begin{enumerate}[i)]
\item\label{i:fccc} for all $x_1,\dotsc,x_{n+1}\in K$ and all non-negative $\lambda_1,\dotsc,\lambda_{n+1}$ that sum up to one and such that $x=\sum_{i=1}^{n+1}\lambda_ix_i$ there is
\begin{equation*}
f(x)- \sum_{i=1}^{n+1}\lambda_i f(x_i)\leq\sum_{i=1}^{n+1}\lambda_ic\Big(\sum_{j=1}^{n+1}\lambda_jx_j,x_i\Big),
\end{equation*}
\item\label{i:gammaccc} there exists $\gamma\in\mathbb{R}^n$ such that for all $y\in K$ in the minimal face of $K$ that contains $x$ we have
\begin{equation*}
f(x)-f(y)\leq c(x,y)+\langle \gamma(x), y-x\rangle.
\end{equation*}
\item\label{i:martccc} for any bounded random variable $X$ with values in $K$ and barycentre $x$ there is
\begin{equation*}
\mathbb{E}\big( f(x)-f(X)\big)\leq \mathbb{E}c(x,X).
\end{equation*}
\end{enumerate} 
Moreover, suppose that $K$ is open and additionally for any $x_0\in K$ there exist open, convex set $K'\subset K$ such that $x_0\in K'$ and $\abs{c(x,y)}\leq\Lambda \norm{x-y}$ for all $x,y\in K'$ and some constant $\Lambda$ and that $c$ is locally Lipschitz in the second variable. Suppose that $f\colon K\to\mathbb{R}$ satisfies one of the above conditions for each $x\in K$. Then it satisfies all the other conditions for each $x\in K$.
 In such case, any function that satisfies the above conditions is locally Lipschitz.
\end{theorem}
\begin{proof}
The equivalence of the conditions \ref{i:fccc} \ref{i:gammaccc} and \ref{i:martccc} follows from Theorem \ref{thm:col}. The second part of the corollary follows from Theorem \ref{thm:lipm}, as, in such case, any function $f$ that satisfies \ref{i:fccc}, \ref{i:gammaccc} or \ref{i:martccc} is continuous in $K$.
\end{proof}

\section{Uniform convexity and uniform smoothness}\label{s:uniform}

In this section we employ the results of Section \ref{s:duality} and Section \ref{s:continuity} to provide a characterisation of uniformly smooth and uniformly convex functions on $\mathbb{R}^n$, or, more generally, on an open, convex set $K\subset\mathbb{R}^n$. We refer the reader to \cite{Aze} and to \cite{Zalinescu} and references therein for previous studies of the topic. Let us recall the definitions. 

\begin{definition}
Let $\sigma\colon\mathbb{R}\to\mathbb{R}$. A function $f\colon K\to\mathbb{R}$ is called $\sigma$-convex provided that 
\begin{equation*}
f(\lambda x+(1-\lambda)y)+\lambda(1-\lambda)\sigma(\norm{x-y})\leq \lambda f(x)+(1-\lambda)f(y)
\end{equation*}
for all $\lambda\in [0,1]$ and all $x,y\in K$. A function $g\colon K\to\mathbb{R}$ is called $\sigma$-smooth provided that 
\begin{equation*}
g(\lambda x+(1-\lambda)y)+\lambda(1-\lambda)\sigma(\norm{x-y})\geq \lambda g(x)+(1-\lambda)g(y)
\end{equation*}
for all $\lambda\in [0,1]$ and all $x,y\in K$. 
\end{definition}

Another notion of convexity and smoothness is as follows, see \cite{Aze}. 

\begin{definition}
Let $\gamma\in\mathbb{R}^n$ and let $x\in K$. We say that $f\colon K\to\mathbb{R}$ is $\sigma$-uniformly convex at $x$ with respect to $\gamma$ if for all $y\in\ K$ there is
\begin{equation*}
f(x)+\sigma(\norm{y-x})+\langle \gamma,y-x\rangle\leq f(y).
\end{equation*}
Likewise, $g\colon K\to\mathbb{R}$ is called $\sigma$-uniformly smooth at $x$ with respect to $\gamma$ if for all $y\in K$ there is
\begin{equation*}
g(x)+\sigma(\norm{y-x})+\langle \gamma,y-x\rangle\geq g(y).
\end{equation*}
\end{definition}

Note that the condition that $f\colon K\to\mathbb{R}$ is $\sigma$-uniformly convex at $x\in K$ is  equivalent to condition \ref{i:gammaccc} of Theorem \ref{thm:onefunction} for the function 
\begin{equation*}
c(x,y)=-\sigma(\norm{y-x})\text{, }x,y\in K. 
\end{equation*}
Similarly, $\sigma$-uniform smoothness at a point $x\in K$ of a function $g\colon K\to\mathbb{R}$ is equivalent to condition \ref{i:gammaccc} of Theorem \ref{thm:onefunction} for $-g$ and the function 
\begin{equation*}
c(x,y)=\sigma(\norm{y-x})\text{, } x,y\in K.
\end{equation*}

Now, Theorem \ref{thm:onefunction} implies the following theorem, which complements the results of \cite{Aze}.

\begin{theorem}\label{thm:uni}
Let $K\subset\mathbb{R}^n$ be an open, convex set. Suppose that $\sigma\colon\mathbb{R}\to\mathbb{R}$ is locally Lipschitz function such that $\sigma(0)=0$.
Let $f\colon K\to\mathbb{R}$. The following conditions are equivalent:
\begin{enumerate}[i)]
\item there exists $\gamma\colon K\to\mathbb{R}^n$ such that for any $x\in K$ the function $f$ is $\sigma$-uniformly convex at $x$ with respect to $\gamma(x)\in\mathbb{R}^n$,
\item for any $x_1,\dotsc,x_{n+1}\in K$ and any non-negative $\lambda_1,\dotsc,\lambda_{n+1}$ that sum up to one there is 
\begin{equation*}
f\Big(\sum_{i=1}^{n+1}\lambda_ix_i\Big)-\sum_{i=1}^{n+1}\lambda_i f(x_i)\leq -\sum_{i=1}^{n+1}\lambda_i\sigma\Big(\Big\lVert \sum_{j=1}^{n+1}\lambda_jx_j-x_i \Big\rVert\Big),
\end{equation*}
\item for any bounded one-step martingale $(X_0,X_1)$ with values in $K$ there is
\begin{equation*}
\mathbb{E}\big(f(X_0)-f(X_1)\big)\leq -\mathbb{E}\sigma(\norm{X_0-X_1}).
\end{equation*}
\end{enumerate}
Also,  the following conditions are equivalent:
\begin{enumerate}[i)]
\item there exists $\gamma\colon K\to\mathbb{R}^n$ such that for any $x\in K$ the function $f$ is $\sigma$-uniformly smooth at $x$ with respect to $\gamma(x)\in\mathbb{R}^n$,
\item for any $x_1,\dotsc,x_{n+1}\in K$ and any non-negative $\lambda_1,\dotsc,\lambda_{n+1}$ that sum up to one there is 
\begin{equation*}
f\Big(\sum_{i=1}^{n+1}\lambda_ix_i\Big)-\sum_{i=1}^{n+1}\lambda_i f(x_i)\geq -\sum_{i=1}^{n+1}\lambda_i\sigma\Big(\Big\lVert \sum_{j=1}^{n+1}\lambda_jx_j-x_i \Big\rVert\Big),
\end{equation*}
\item for any bounded one-step martingale $(X_0,X_1)$ with values in $K$ there is
\begin{equation*}
\mathbb{E}\big(f(X_0)-f(X_1)\big)\geq -\mathbb{E}\sigma(\norm{X_0-X_1}).
\end{equation*}
\end{enumerate}
Moreover, any function $f$ that satisfies one of the above conditions is locally Lipschitz in $K$.

If we assume that $\sigma$ and $f$ are continuous, then the respective conditions at single $x\in K$ are equivalent.
\end{theorem}
\begin{proof}
The assumptions on $\sigma$ imply that the functions
\begin{equation*}
(x,y)\mapsto-\sigma(\norm{y-x})\text{ and }(x,y)\mapsto \sigma(\norm{y-x})
\end{equation*}
are locally Lipschitz in $K\times K$ and moreover for any $x_0\in K$ there exists an open, convex set $K'\subset K$ such that $x_0\in K'$ and for all $x,y\in K'$ there is 
\begin{equation*}
\abs{\sigma(\norm{y-x})}\leq\Lambda\norm{y-x}, 
\end{equation*}
where $\Lambda$ depends on $\sigma$. Therefore, from Theorem \ref{thm:lipm}, we infer that any function that satisfies one of the above conditions is continuous. Hence, the assumptions of Theorem \ref{thm:onefunction} are satisfied. 
Thus, the assertion of the theorem follows from the conclusion of Theorem \ref{thm:onefunction}.

If we do not put any assumptions on regularity of $\sigma$, but its continuity, then the claim follows again by Theorem \ref{thm:onefunction}.
\end{proof}

\section{Optimal transport}\label{s:transport}

In this section, Corollary \ref{col:twospaces} is employed to prove Kantorovich duality. The theorem below also provides a reinterpretation of the Kantorovich problem as minimisation of a linear functional over all Choquet's representation of a pair of probability measures.

\begin{theorem}\label{thm:opti}
Let $X,Y$ be two locally compact Polish spaces and let $c\colon X\times Y\to\mathbb{R}$ be a bounded Lipschitz function. Let $\mu$ and $\nu$ be Borel probability measures on $X$ and $Y$ respectively.
Then the supremum of integrals
\begin{equation*}
\int_X \phi \,d\mu-\int_Y \psi \,d\nu
\end{equation*}
taken over the set of continuous, bounded functions $\phi\in\mathcal{C}(X),\psi\in\mathcal{C}(Y)$ such that
\begin{equation*}
 \phi(x)-\psi(y)\leq c(x,y)\text{ for all }x\in X,y\in Y
\end{equation*}
is equal to the infimum of integrals
\begin{equation*}
\int_{X\times Y} c\,d\pi 
\end{equation*}
over all $\pi\in \Gamma (\mu,\nu)$. Here $\Gamma(\mu,\nu)$ stands for the set of all Borel probability measures on $X\times Y$ such that its marginals are $\mu$ and $\nu$ respectively. Moreover, both supremum and infimum are attained.
\end{theorem}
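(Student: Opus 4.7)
The plan is to combine a compactness-based existence argument for the dual maximizer with the Choquet-type decomposition of Corollary~\ref{col:twospaces}. Weak duality is immediate: for any feasible $(\phi,\psi)$ and any $\pi\in\Gamma(\mu,\nu)$,
\begin{equation*}
\int_X\phi\,d\mu-\int_Y\psi\,d\nu=\int_{X\times Y}\bigl(\phi(x)-\psi(y)\bigr)\,d\pi(x,y)\leq\int_{X\times Y}c\,d\pi,
\end{equation*}
so $\sup\leq\inf$, and it suffices to exhibit a feasible maximizer $(\phi_0,\psi_0)$ and a coupling $\pi\in\Gamma(\mu,\nu)$ achieving equality. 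The maximizer is obtained by standard $c$-convexification: the replacements $\psi\mapsto\sup_x(\phi(x)-c(x,\cdot))$ and $\phi\mapsto\inf_y(c(\cdot,y)+\psi(y))$ improve the objective while producing Lipschitz functions with constant at most $\mathrm{Lip}(c)$; after the normalization $\phi(x_0)=0$ one has equi-bounded, equi-Lipschitz maximizing sequences, so Arzel\`a--Ascoli on the compact spaces $X,Y$ delivers a uniformly convergent subsequence whose limit $(\phi_0,\psi_0)$ is a feasible maximizer.

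The crux is to translate optimality of $(\phi_0,\psi_0)$ into a cone order. Set $\tilde c(x,y)=c(x,y)-\phi_0(x)+\psi_0(y)\geq 0$ and let $\mathcal{F}$ be the uniform closure in $\mathcal{C}(X\sqcup Y)$ of
\begin{equation*}
\bigl\{f\in\mathcal{C}(X\sqcup Y): f|_X(x)-f|_Y(y)\leq\lambda\,\tilde c(x,y)\text{ for some }\lambda\geq 0,\text{ all }x,y\bigr\}.
\end{equation*}
Then $\mathcal{F}$ is a closed convex cone containing the constants, and a four-case analysis at each pair $(x,y)$ on which of $f|_X(x),g|_X(x)$ and $f|_Y(y),g|_Y(y)$ is larger shows that $\mathcal{F}$ is closed under pointwise maxima. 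Re-parametrising a generator of $\mathcal{F}$ as $(\phi-\lambda\phi_0,\psi-\lambda\psi_0)$ with $\phi-\psi\leq\lambda c$, one verifies that $(\mu',\nu')\in\mathcal{P}_{\mathcal{F}}$ if and only if $(\phi_0,\psi_0)$ remains a maximizer of the dual problem at $(\mu',\nu')$; in particular $(\mu,\nu)\in\mathcal{P}_{\mathcal{F}}$.

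Applying Corollary~\ref{col:twospaces} to $\mathcal{F}$ and $(\mu,\nu)$ yields a Markov kernel $P$ from $X$ to $Y$ with $\nu=P\mu$ and $(\delta_x,P(\cdot,x))\in\mathcal{P}_{\mathcal{F}}$ for every $x\in X$. Define $d\pi(x,y)=P(dy,x)\,d\mu(x)\in\Gamma(\mu,\nu)$. By the cone characterization, $(\phi_0,\psi_0)$ is dual-optimal at each $(\delta_x,P(\cdot,x))$. Duality at such a degenerate pair is elementary: the feasible choice $\psi(y)=-c(x,y)$ together with $\phi(z)=\inf_{y'}(c(z,y')-c(x,y'))$ is Lipschitz in $z$ and attains the primal value $\int c(x,y)\,P(dy,x)$, so $\phi_0(x)-\int\psi_0\,dP(\cdot,x)=\int c(x,y)\,P(dy,x)$. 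Integrating against $\mu$ gives $\int c\,d\pi=\int\phi_0\,d\mu-\int\psi_0\,d\nu$; together with weak duality this delivers both equality of the two extrema and simultaneous attainment.

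The main obstacle is the design of $\mathcal{F}$ so that $\prec_{\mathcal{F}}$ precisely captures optimality of the fixed maximizer. Verifying that $\mathcal{F}$ is max-stable requires the short four-case argument, and the passage to the uniform closure is needed to fit the hypotheses of Corollary~\ref{col:twospaces}; the remaining ingredients---$c$-convexification plus Arzel\`a--Ascoli, and the explicit feasible pair witnessing duality at $(\delta_x,\eta)$---are routine.
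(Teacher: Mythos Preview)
Your argument is correct and follows essentially the same strategy as the paper: build the tangent cone at the dual optimizer $(\phi_0,\psi_0)$, verify it satisfies the hypotheses of Corollary~\ref{col:twospaces}, and use that corollary to manufacture an optimal coupling. The construction of the cone, the $c$-convexification/Arzel\`a--Ascoli step, and the explicit witness $\psi(y)=-c(x,y)$, $\phi(z)=\inf_{y'}(c(z,y')-c(x,y'))$ all match the paper's ingredients.

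There is one presentational difference worth noting. The paper pushes further and identifies the extreme points of $\mathcal{P}_{\mathcal{F}}$ as the pairs $(\delta_x,\delta_y)$ with $\phi_0(x)-\psi_0(y)=c(x,y)$ (using a symmetry argument to reduce from $(\delta_x,\eta)$ to Dirac pairs), then invokes Choquet's theorem to obtain a representing measure $\pi_0$ on this set and defines $\pi=\int\xi_1\otimes\xi_2\,d\pi_0$. You instead take the Markov kernel $P$ that Corollary~\ref{col:twospaces} already provides, set $\pi(dx,dy)=P(dy,x)\,\mu(dx)$, and argue pointwise optimality of $(\phi_0,\psi_0)$ at each pair $(\delta_x,P(\cdot,x))$ without ever computing the extreme points or appealing to Choquet. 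This is a genuine shortcut: it yields the duality and attainment statements with less machinery. What the paper's route buys in exchange is the explicit description of $\mathcal{E}_{\mathcal{F}}$ and the interpretation of the Kantorovich problem as an optimisation over Choquet representations, which is one of the paper's advertised themes.
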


Before we come to the proof of the above theorem, let us first recall that the supremum in the statement is attained. We refer to  \cite[Proof of Theorem 3.1]{Villani2} for a detailed proof.

\begin{lemma}\label{lem:attains}
There exist $\phi_0\in\mathcal{C}(X)$ and $\phi_0\in\mathcal{C}(Y)$ such that for all $x\in X$ and $y\in Y$ there is $\phi_0(x)-\psi_0(y)\leq c(x,y)$ and for all $\phi\in\mathcal{C}(X)$ and all $\psi\in\mathcal{C}(Y)$ that satisfy $\phi(x)-\psi(y)\leq c(x,y)$ for all $x\in X$ and $y\in Y$ there is
\begin{equation*}
\int_X \phi \,d\mu-\int_Y \psi \,d\nu\leq \int_X \psi_0 \,d\mu-\int_Y \phi_0 \,d\nu.
\end{equation*}
\end{lemma}

\begin{proof}[of Theorem \ref{thm:opti}]
Without loss of generality we may assume that $c$ is non-negative. Pick $\phi_0\in\mathcal{C}(X)$ and $\psi_0\in\mathcal{C}(Y)$ from the lemma above.
Define $\rho_0\in\mathcal{C}(X\cup Y)$ so that $\rho_0(x)=\phi_0(x)$ for $x\in X$ and $\rho_0(y)=\psi_0(y)$ for $y\in Y$.
Let $\mathcal{K}$ denote the set of all bounded continuous functions $\rho$ on $X\cup Y$ such that 
for $x\in X$ and $y\in Y$ there is
\begin{equation*}
\rho(x)-\rho(y)\leq c(x,y).
\end{equation*}
Observe that $\mathcal{K}$ is a convex set that is stable under maxima, contains constants $\mathcal{K}+c\subset\mathcal{K}$ for all $c\in\mathbb{R}$. Moreover, for all $\rho\in\mathcal{K}$ there is
\begin{equation*}
\int_X \rho \,d\mu-\int_Y \rho \,d\nu\leq \int_X \rho_0 \,d\mu-\int_Y \rho_0 \,d\nu.
\end{equation*}
By Corollary \ref{col:twospaces} the extreme points of pairs of Borel probability measures satisfying such inequality are contained in the set of pairs of the form $(\delta_x,\eta)$ with $x\in X$ and $\eta$ a probability measure on $Y$. 
By symmetry, the set of extreme points is contained in the set of pairs of the form $(\delta_{x_0},\delta_{y_0})$for some $x_0\in X$ and $y_0\in Y$. 
For any such extreme point $(\delta_{x_0},\delta_{y_0})$ there is $\rho_0(x_0)-\rho_0(y_0)=c(x_0,y_0)$. Indeed, define $\rho(x)=c(x,y_0)$ for $x\in X$ and set for $y\in Y$
\begin{equation*}
\rho(y)=\sup\{ c(x,y_0)-c(x,y)\mid x\in X\}.
\end{equation*}
Then $\rho\in\mathcal{K}$ and $\rho(x_0)-\rho(y_0)=c(x_0,y_0)$. Thus also $\rho_0(x_0)-\rho_0(y_0)=c(x_0,y_0)$. 

It follows that the considered set $\mathcal{E}$ of extreme points is equal to
\begin{equation*}
\Big\{(\delta_x,\delta_y)\mid \rho_0(x)-\rho_0(y)=c(x,y), x\in X,y\in Y\Big\}.
\end{equation*}
By Choquet's theorem there is a probability measure $\pi_0$ on  $\mathcal{E}$ such that 
\begin{equation}\label{eqn:repr}
(\mu,\nu)=\int_{\mathcal{E}} (\xi_1,\xi_2)\,d\pi_0(\xi).
\end{equation}
Define 
\begin{equation*}
\pi=\int_{\mathcal{E}}\xi_1\otimes \xi_2 \,d\pi_0(\xi).
\end{equation*}
Then, by (\ref{eqn:repr}), $\pi\in\Gamma(\mu,\nu)$ and
\begin{equation*}
\int_{X\times Y}c \,d\pi=\int_X \phi \,d\mu-\int_Y \psi\, d\nu
\end{equation*}
and the proof is complete.
\end{proof}

\begin{remark}\label{rem:semic}
If $c\colon X\times Y\to\mathbb{R}$ is a lower semi-continuous function, then it may be written as a supremum of a sequence of bounded and Lipschitz functions, see e.g. \cite{Villani2}. Applying Theorem \ref{thm:opti} for each function from the sequence, we may obtain the duality result for the function $c$.
\end{remark}

\section{Kantorovich--Rubinstein duality}\label{s:Kant-Rub}

In the present section we present a proof of Kantorovich--Rubinstein duality analogous to the proof in the former section.

\begin{theorem}\label{thm:kr}
Suppose that $\Omega$ is a bounded, locally compact Polish space with metric $d$. Let $\mu$ and $\nu$ be Borel probability measures on $\Omega$. Then the supremum of integrals
\begin{equation*}
\int_{\Omega} g\, d(\mu-\nu)
\end{equation*}
taken over the set of $1$-Lipschitz functions $g\in\mathcal{C}(\Omega)$ is equal to the infimum of integrals
\begin{equation*}
\int_{\Omega \times \Omega} d\,d\pi
\end{equation*}
over all $\pi\in\Gamma(\mu,\nu)$.
Here $\Gamma(\mu,\nu)$ stands for the set of all Borel probability measures on $\Omega\times \Omega$ such that its marginals are $\mu$ and $\nu$ respectively.
Moreover, both supremum and infimum are attained.
\end{theorem}
\begin{proof}
The fact that the supremum is attained follows by Arzel\`a--Ascoli theorem and by Ulam's lemma, cf. Lemma \ref{lem:attains}. Take a $1$-Lipschitz function $f\colon \Omega\to\mathbb{R}$ such that 
\begin{equation}\label{eqn:lipnowe}
\int_{\Omega} g\, d(\mu-\nu)\leq \int_{\Omega} f \,d(\mu-\nu)
\end{equation}
for all $1$-Lipschitz functions $g\in\mathcal{C}(\Omega)$. 
The set $\mathcal{K}$ of all $1$-Lipschitz functions satisfies assumptions of Proposition {pro:extr}. Observe that $\mathcal{K}$ is also stable under minima.
Hence, the set of extreme points of pairs of measures that satisfy (\ref{eqn:lipnowe}) is contained in the set of pairs of the form $(\delta_{x_0},\delta_{y_0})$ for some $x_0,y_0\in\Omega$. We claim that for any such extreme point  there is 
\begin{equation}\label{eqn:lipp}
f(x_0)-f(y_0)=d(x_0,y_0).
\end{equation}
Indeed, fix an extreme point $(\delta_{x_0},\delta_{y_0})$, $x_0,y_0\in\Omega$. Define $f_0(x)=d(x,y_0)$. Then $f_0\in\mathcal{K}$ and $f_0(x_0)-f_0(y_0)=d(x_0,y_0)$. Then (\ref{eqn:lipp}) follows, as
\begin{equation*}
d(x_0,y_0)= f_0(x_0)-f_0(y_0)\leq f(x_0)-f(y_0)\leq d(x_0,y_0).
\end{equation*}
By the Choquet's theorem,  there exists a Borel probability measure $\pi_0$ on the set of extreme points $\mathcal{E}$ such that 
\begin{equation}\label{eqn:ee}
(\mu,\nu)=\int_{\mathcal{E}}\xi\, d\pi_0(\xi).
\end{equation}
Define 
\begin{equation*}
\pi=\int_{\mathcal{E}}\xi_1\otimes \xi_2 \,d\pi_0(\xi).
\end{equation*}
Then $\pi\in\Gamma(\mu,\nu)$, by (\ref{eqn:ee}). Moreover
\begin{equation*}
\int_{\Omega\times \Omega} d(x,y) \,d\pi(x,y)=\int_{\Omega\times \Omega} \big(f(x)-f(y)\big) \,d\pi(x,y)=\int_{\Omega} f\,d(\mu-\nu).
\end{equation*}
\end{proof}

\section{Multi-marginal optimal transport}\label{s:multi}

Here we generalise our approach to the multi-marginal optimal transport with finitely many marginals, see e.g. \cite{Pass} for a previous account on this topic. The duality results have been already established in \cite{Kellerer2}.
In what follows we shall need the following lemma, see \cite{Pass2} for a less general version. The lemma provides a version of the $c$-convexification method employed in multi-marginal optimal transport problems, see e.g. \cite{Carlier}, \cite{Swiech}.

\begin{lemma}\label{lem:convexification}
Let $X_1,\dotsc,X_k$ be metric spaces. Let 
\begin{equation*}
c\colon X_1\times\dotsc\times X_k\to\mathbb{R}
\end{equation*}
be a Lipschitz function. Let $A_i\subset X_i$ for $i=1,\dotsc,k$ and let 
\begin{equation*}
f_i\colon A_i\to\mathbb{R}\text{ for }i=1,\dotsc,k
\end{equation*}
be such that for all $x_i\in A_i$, $i=1,\dotsc,k$, there is
\begin{equation}\label{eqn:inmany}
\sum_{i=1}^k f_i(x_i)\leq c(x_1,\dotsc,x_k).
\end{equation}
Then there exists Lipschitz functions $\tilde{f}_i\colon X_i\to\mathbb{R}$, $i=1,\dotsc, k$, such that condition (\ref{eqn:inmany}) holds true for all $x_i\in X_i$, $i=1,\dotsc, k$. Moreover $f_i(x_i)\leq \tilde{f}_i(x_i)$ for all $x_i\in A_i$ and $i=1,\dotsc, k$.
Each $\tilde{f}_i$, $i=1,\dotsc,k$, may be taken so that its Lipschitz constant is at most the Lipschitz constant of $c$.
\end{lemma}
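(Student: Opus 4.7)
The plan is to perform a multi-marginal $c$-convexification, extending the $f_i$ one index at a time. Write $L=\mathrm{Lip}(c)$. First I set
\begin{equation*}
\tilde{f}_1(x_1)=\inf\Big\{c(x_1,x_2,\dotsc,x_k)-\sum_{j=2}^{k} f_j(x_j)\,\Big|\, x_j\in A_j,\ j=2,\dotsc,k\Big\}
\end{equation*}
for $x_1\in X_1$, and then, inductively for $i=2,\dotsc,k$,
\begin{equation*}
\tilde{f}_i(x_i)=\inf\Big\{c(x_1,\dotsc,x_k)-\sum_{j<i}\tilde{f}_j(x_j)-\sum_{j>i} f_j(x_j)\Big\},
\end{equation*}
with the infimum taken over $x_j\in X_j$ for $j<i$ and $x_j\in A_j$ for $j>i$. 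The mixed role of $X_j$ versus $A_j$ in these formulas is exactly what makes the iteration close up, and keeping track of the two ranges is the delicate bookkeeping point.

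The expression under the infimum defining $\tilde{f}_i$ is $L$-Lipschitz in the free variable $x_i$, uniformly in the other parameters, so $\tilde{f}_i$ automatically inherits the same Lipschitz constant once it is known to be finite. To see finiteness, I would fix a reference point $(a_1,\dotsc,a_k)\in A_1\times\dotsc\times A_k$ and combine (\ref{eqn:inmany}) at this base point with the Lipschitz bound on $c$, together, at step $i$, with the affine lower bound on the $\tilde{f}_j$ for $j<i$ already obtained at previous steps, to get a lower bound of affine type on the integrand.

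The core of the argument is the inductive claim that $\tilde{f}_i(x_i)\geq f_i(x_i)$ for $x_i\in A_i$. The base case $i=1$ is immediate from (\ref{eqn:inmany}). For the inductive step, fix $x_i\in A_i$ together with any admissible test tuple; applying the defining infimum of $\tilde{f}_{i-1}$ at the specific tail $(x_i,x_{i+1},\dotsc,x_k)$, which is a legitimate test because $x_j\in A_j$ for every $j\geq i$ (the hypothesis $x_i\in A_i$ is precisely what licenses the choice $j=i$), yields
\begin{equation*}
\tilde{f}_{i-1}(x_{i-1})\leq c(x_1,\dotsc,x_k)-\sum_{j<i-1}\tilde{f}_j(x_j)-\sum_{j\geq i} f_j(x_j).
\end{equation*}
Rearranging gives $\sum_{j<i}\tilde{f}_j(x_j)+f_i(x_i)+\sum_{j>i}f_j(x_j)\leq c(x_1,\dotsc,x_k)$, whence $f_i(x_i)\leq\tilde{f}_i(x_i)$. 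Finally, the global bound $\sum_{j=1}^{k}\tilde{f}_j(x_j)\leq c(x_1,\dotsc,x_k)$ on all of $X_1\times\dotsc\times X_k$ is built into the definition of $\tilde{f}_k$, whose infimum now ranges over every $x_j\in X_j$, so no separate argument is needed there. The main obstacle is thus the propagation of the compatibility condition with (\ref{eqn:inmany}) through the iteration, which is resolved by the asymmetric $X_j/A_j$ prescription in the definition of $\tilde{f}_i$.
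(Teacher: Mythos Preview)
Your argument is correct and is essentially the same $c$-convexification procedure the paper uses: the paper defines $\tilde{f}_i$ by exactly your mixed $X_j/A_j$ infimum and asserts $f_i\leq\tilde{f}_i$ on $A_i$ and $\sum_j\tilde{f}_j\leq c$ without further comment, whereas you supply the careful inductive verification (via the definition of $\tilde{f}_{i-1}$) and the finiteness check. The only additional thing the paper records, not strictly needed for the lemma, is the self-consistency identity $\tilde{f}_i(x_i)=\inf\{c(x_1,\dotsc,x_k)-\sum_{j\neq i}\tilde{f}_j(x_j)\mid x_j\in X_j\}$, which it later uses in Lemma~\ref{lem:boundedness}.
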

\begin{proof}
We define inductively $\tilde{f}_i(x_i)$,  $x_i\in X_i$, for $i=1,\dotsc,k$ as
\begin{equation*}
\inf\Big\{c(x_1,\dotsc,x_k)-\sum_{j=1}^{i-1}\tilde{f}_j(x_j)-\sum_{j=i+1}^k f_j(x_j) \mid x_j\in X_j\text{ if }j< i, x_j\in A_j\text{ if }j>i\Big\}.
\end{equation*}
Then $\sum_{i=1}^k\tilde{f}_i(x_i)\leq c(x_1,\dotsc,x_k)$ for $x_i\in X_i$, $i=1,\dotsc,k$, and thus
\begin{equation*}
\tilde{f}_i(x_i)\leq\inf\Big\{c(x_1,\dotsc,x_k)-\sum_{j\neq i}\tilde{f}_j(x_j)\mid x_j \in X_j, j\neq i\Big\}.
\end{equation*}
Moreover $f_i\leq\tilde{f}_i$ on $A_i$ for all $i=1,\dotsc,k$ and thus $\tilde{f}_i$ is at least the infimum on the right-hand side of the above equality. This is to say, for $x_i\in X_i$ and $i=1,\dotsc,k$
\begin{equation*}
\tilde{f}_i(x_i)=\inf\Big\{c(x_1,\dotsc,x_k)-\sum_{j\neq i}\tilde{f}_j(x_j) \mid x_j\in X_j, j\neq i\Big\}.
\end{equation*}
If $c$ was $L$-Lipschitz, then $\tilde{f}_i$, $i=1,\dotsc,k$ are $L$-Lipschitz as infima of $L$-Lipschitz functions.
\end{proof}

\begin{remark}\label{rem:infima}
Pick $x_i\in X_i$, $i=1,\dotsc,k$. Let $f(x_1)=c(x_1,\dotsc,x_k)$ and let $f(x_i)=0$ for $i=2,\dotsc,k$. Then the assumptions of the above lemma are satisfied with $A_i=\{x_i\}$, $i=1,\dotsc,k$. Therefore we may apply the $c$-convexification procedure described above in the proof, to obtain functions $\tilde{f}_i\colon X_i\to\mathbb{R}$, $i=1,\dotsc,k$ such that 
\begin{equation*}
\sum_{i=1}^k \tilde{f}(y_i)\leq c(y_1,\dotsc,y_k)\text{ for all }y_i\in X_i 
\end{equation*}
and moreover 
\begin{equation*}
\sum_{i=1}^k\tilde{f}(x_i)=c(x_1,\dotsc,x_k).
\end{equation*}
\end{remark}
The following lemma is based on \cite[Remark 1.13]{Villani2}.

\begin{lemma}\label{lem:boundedness}
Let $X_1,\dotsc,X_k$ be sets. Let 
\begin{equation*}
c\colon X_1\times\dotsc\times X_k\to\mathbb{R}
\end{equation*}
be a bounded function. Suppose that $f_i\colon X_i\to\mathbb{R}$, $i=1,\dotsc, k$, are such that for all $x_i\in X_i$ and $i=1,\dotsc,k$
\begin{equation*}
f_i(x_i)=\inf\Big\{c(x_1,\dotsc,x_k)-\sum_{j\neq i}f_j(x_j) \mid x_j\in X_j, j\neq i\Big\}.
\end{equation*}
Then there exist constants $h_1,\dotsc,h_k\in\mathbb{R}$ that sum up to zero, such that the functions $\tilde{f}_i=f_i+h_i$ satisfy
\begin{equation*}
\tilde{f}_i(x_i)=\inf\Big\{c(x_1,\dotsc,x_k)-\sum_{j\neq i}\tilde{f}_j(x_j) \mid x_j\in X_j, j\neq i\Big\}
\end{equation*}
and all of them are bounded by the uniform norm of $c$ times $\max\{k, 3\}$.
\end{lemma}
\begin{proof}
Note that for any $h_1,\dotsc,h_k$ that sum up to zero there is 
\begin{equation}\label{eqn:takie}
\inf\Big\{c(x_1,\dotsc,x_k)-\sum_{j\neq i}\tilde{f}_j(x_j) \mid x_j\in X_j, j\neq i\Big\}=f_i(x_i)-\sum_{j\neq i }h_j=\tilde{f}_i(x_i).
\end{equation}
Thus the first assertion is proven. Let $M$ denote the uniform norm of $c$. Choose $h_1,\dotsc,h_k$ in such a way that 
\begin{equation*}
\sup\{\tilde{f}_i(x_i)\mid x_i\in X_i\}=M\text{ for }i=2,\dotsc,k.
\end{equation*}
Note that by (\ref{eqn:takie}) it follows that for $i=1,\dotsc,k$ and all $x_i\in X_i$
\begin{equation*}
-M-\sum_{j\neq i}\sup\{\tilde{f}_j(x_j)\mid x_j\in X_j\}\leq\tilde{f}_i(x_i)\leq M-\sum_{j\neq i}\sup\{\tilde{f}_j(x_j)\mid x_j\in X_j\}.
\end{equation*}
Thus, for all $x_1\in X_1$
\begin{equation}\label{eqn:takiet}
-kM\leq\tilde{f}_1(x_1)\leq (2-k)M.
\end{equation}
Now, again from (\ref{eqn:takie}) and from (\ref{eqn:takiet}), we get that for $i=2,\dotsc,k$ and $x_i\in X_i$
\begin{equation*}
-M-(k-2)M+(k-2)M\leq \tilde{f}_i(x_i)\leq M-(k-2)M+kM.
\end{equation*}
Hence, for such indices $i$,
\begin{equation*}
-M\leq \tilde{f}_i(x_i)\leq 3M.
\end{equation*}
\end{proof}

The following theorem provides a novel interpretation of the Kantorovich problem in the multi-marginal setting as minimisation of a certain linear functional over the set of all Choquet's representations of $k$-tuples of probability measures.

\begin{theorem}\label{thm:optimany}
Let $X_1,\dotsc,X_k$ be locally compact Polish spaces. Let $
c\colon X_1\times\dotsc\times X_k\to\mathbb{R}$ be a bounded Lipschitz function. 
Let $\mu_i$ be a Borel probability measure on $X_i$ for each $i=1,\dotsc,k$.
Then the supremum of sum of integrals
\begin{equation*}
\sum_{i=1}^k\int_{X_i}f_i\, d\mu_i 
\end{equation*}
taken over the set of continuous, bounded functions $f_i\in\mathcal{C}(X_i)$, $i=1,\dotsc,k$, such that
\begin{equation*}
\sum_{i=1}^kf_i(x_i)\leq c(x_1,\dotsc,x_k)\text{ for all }x_i\in X_i, i=1,\dotsc,k
\end{equation*}
is equal to the infimum of integrals
\begin{equation*}
\int_{X_1\times\dotsc\times X_k} c\,d\pi
\end{equation*}
over all $\pi\in \Gamma (\mu_1,\dotsc,\mu_k)$. 
Here $\Gamma (\mu_1,\dotsc,\mu_k)$ stands for the set of all Borel probability measures on $X_1\times\dotsc\times X_k$ such that its marginals on $X_i$ are $\mu_i$ for $i=1,\dotsc,k$. Moreover, both supremum and infimum are attained.
\end{theorem}

\begin{lemma}\label{lem:tuples}
Let $X_1,\dotsc,X_k$ be locally compact Polish spaces. Let $c\colon X_1\times\dotsc\times X_k\to\mathbb{R}$
be a non-negative bounded Lipschitz function,.  
Let $\mathcal{L}$ denote the set of all bounded continuous functions $g\in\mathcal{C}(X_1\cup\dotsc\cup X_k)$ such that for all $x_i\in X_i$, $i=1,\dotsc,k$, we have
\begin{equation*}
\sum_{i=1}^kg(x_i)\leq c(x_1,\dotsc,x_k).
\end{equation*}
Let $f\in \mathcal{L}$.
Then the set of extreme points of the set $\mathcal{P}$ of $k$-tuples of Borel probability measures $(\mu_1,\dotsc,\mu_k)\in\mathcal{P}(X_1)\times\dotsc\times\mathcal{P}(X_k)$ such that 
\begin{equation*}
\sum_{i=1}^k\int_{X_i} g \,d\mu_i\leq \sum_{i=1}^k\int_{X_i} f \,d\mu_i
\end{equation*}
for all $g\in\mathcal{L}$ is equal to the set of $k$-tuples of the form $(\delta_{x_1},\dotsc,\delta_{x_k})$,  such that
\begin{equation*}
\sum_{i=1}^k f(x_i)=c(x_1,\dotsc,x_k).
\end{equation*}
\end{lemma}
\begin{proof}
For any  $l\in\{1,\dotsc,k\}$ let $I_l=\{1,\dotsc,l-1,l+1,\dotsc,k\}$ and let $\Omega$ be the disjoint union of all $X_i$, $i=1,\dotsc,k$. Let $(\mu_1,\dotsc,\mu_k)\in\mathcal{P}$. We shall denote by $\tilde{\mu}_l$  the extension of $\mu_l$ to $\Omega$. Let $\mu_{I_l}$ denote the probability measure on $\Omega$ given by 
\begin{equation*}
\mu_{I_l}=\frac1{k-1}\sum_{i\neq l}\tilde{\mu}_i.
\end{equation*}
Then, for any $g\in \mathcal{L}$, we have
\begin{equation*}
\int_{\Omega}g\,d\mu_l+\int_{\Omega}(k-1)g\,d\mu_{I_l}\leq \int_{\Omega}f\,d\mu_l+\int_{\Omega}(k-1)f\,d\mu_{I_l}.
\end{equation*}
Denote by $X_{I_l}$ the disjoint union of $X_i$, $i\in I_l$. Let $\mathcal{L}_l$ denote the convex set of all continuous bounded functions on $\Omega$ which are equal to $g$ on $X_l$ and to $-(k-1)g$ on $X_{I_l}$ for some $g\in\mathcal{L}$.
Then, $\mathcal{L}_l$ is stable under maxima, contains constants for any constant $t$ there is $t+\mathcal{L}_l\subset\mathcal{L}_l$. Moreover for any $h\in\mathcal{L}_l$ there is
\begin{equation*}
\int_{X_l}h\,d\mu_l-\int_{X_{I_l}}h\,d\mu_{I_l}\leq \int_{X_l}f\,d\mu_l-\int_{X_{I_l}}(1-k)f\,d\mu_{I_l}.
\end{equation*}

By Corollary \ref{col:twospaces}, the extreme points of the set $\mathcal{P}_l$ of pairs of Borel probability measures $(\mu,\nu)\in \mathcal{P}(X_l)\times \mathcal{P}\big(\bigcup_{i\in I_l}X_i\big)$ such that 
\begin{equation*}
\int_{X_l}h\,d\mu-\int_{X_{I_l}}h\,d\nu\leq \int_{X_l}f\,d\mu-\int_{X_{I_l}}(1-k)f\,d\nu
\end{equation*}
for all $h\in\mathcal{L}_l$ 
are of the form $(\delta_x,\eta)$ for some probability $\eta\in \mathcal{P}\big(\bigcup_{i\in I_l}X_i\big)$. 
By the Choquet's theorem there exists a Borel probability measure $\pi_l$ on the set $\mathcal{E}_l$ of extreme points of $\mathcal{P}_l$ such that
\begin{equation*}
(\mu_l,\mu_{I_l})=\int_{\mathcal{E}_l}\xi \,d\pi_l(\xi).
\end{equation*}
Hence for any $i\in I_l$
\begin{equation*}
\mu_i=\int_{\mathcal{E}_l}(k-1)\xi_2|_{X_i}\,d\pi_l(\xi).
\end{equation*}
Here we write $\xi=(\xi_1,\xi_2)$ for $\xi\in \mathcal{E}_l$.
It follows that $\pi_l$-almost all $(k-1)\xi_2|_{X_i}$ are probabilities. We may write
\begin{equation*}
(\mu_1,\dotsc,\mu_l,\dotsc,\mu_k)=\int_{\mathcal{E}_l} \big((k-1)\xi_2|_{X_1},\dotsc,\xi_1,\dotsc,(k-1)\xi_2|_{X_k}\big)\,d\pi_l(\xi).
\end{equation*}       
Observe that for $\pi_l$-almost every $\xi$ there is     
\begin{equation*}
\big((k-1)\xi_2|_{X_1},\dotsc,\xi_1,\dotsc,(k-1)\xi_2|_{X_k}\big)\in\mathcal{P}.
\end{equation*}     
Hence, any extreme point of $\mathcal{P}$ has to be of the form 
\begin{equation*}
(\eta_1,\dotsc,\eta_{l-1},\delta_{x_l},\eta|_{X_{l+1}},\dotsc,\eta|_{X_k})
\end{equation*}
with $x_l\in X_l$ and some probability measures $\eta_i$ for $i\in I_l$.
As this holds true for any $l=1,\dotsc,k$, any extreme point of $\mathcal{P}$ has to have the form $(\delta_{x_1},\dotsc,\delta_{x_k})$ with $x_i\in X_i$, $i=1,\dotsc,k$.

Take now any extreme point $(\delta_{x_1},\dotsc,\delta_{x_k})$ of $\mathcal{P}$ and let $f\in\mathcal{L}$ be as in the statement of the lemma. Then for any $g\in\mathcal{L}$ we have
\begin{equation}\label{eqn:cosik}
\sum_{i=1}^kg(x_i)\leq\sum_{i=1}^k f(x_i).
\end{equation}
By Remark \ref{rem:infima} there exists a function $g\in\mathcal{L}$ such that
\begin{equation*}
\sum_{i=1}^k g(x_i)=c(x_1,\dotsc,x_k).
\end{equation*}
By (\ref{eqn:cosik}) it follows that also
\begin{equation*}
\sum_{i=1}^k f(x_i)=c(x_1,\dotsc,x_k).
\end{equation*}
The proof is complete.
\end{proof}

\begin{proof}[of Theorem \ref{thm:optimany}]
The fact that the supremum is attained follows by Lemmata \ref{lem:convexification}, \ref{lem:boundedness}, by Ulam's lemma and by Arzel\`a--Ascoli theorem, cf. Lemma \ref{lem:attains}.

The assertion follows from Lemma \ref{lem:tuples} and by the Choquet's theorem, cf. Theorem \ref{thm:opti}. Indeed, if $\pi_0$ is a Borel probability measure on the set of extreme points $\mathcal{E}$ of $\mathcal{P}$ of the previous lemma then an optimal $\pi\in\Gamma(\mu_1,\dotsc,\mu_k)$ is given by the formula
\begin{equation*}
\pi=\int_{\mathcal{E}}\xi_1\otimes \dotsc\otimes \xi_k \,d\pi_0(\xi),
\end{equation*}
where $\xi=(\xi_1,\dotsc,\xi_k)\in \mathcal{E}$.
\end{proof}

\begin{remark}
If $c\colon X_1\times \dotsc\times X_k\to\mathbb{R}$ is a lower semi-continuous function, then it may be written as a supremum of a sequence of bounded and Lipschitz functions, see e.g. \cite{Villani2}. Applying Theorem \ref{thm:optimany} for each function from the sequence, we may obtain duality result for the function $c$.
\end{remark}


  \bibliographystyle{plain} 
  \bibliography{refs}
  
\affiliationone{
   Krzysztof J. Ciosmak\\
Fields Institute for Research in Mathematical Sciences, 222 College Street, Toronto, Ontario M5T 3J1, Canada\\
Department of Mathematics, University of Toronto, Bahen Centre, 40 St. George St., Room 6290, Toronto, Ontario, M5S 2E4, Canada
   \email{k.ciosmak@utoronto.ca}}

%


\end{document}